\documentclass[10pt,reqno]{amsart}

\usepackage{amsmath,amssymb,amsfonts,amsthm,mathrsfs,enumitem}
\allowdisplaybreaks

\newtheorem{thm}{Theorem}[section]

\newtheorem{cor}[thm]{Corollary}
\newtheorem{lem}[thm]{Lemma}
\newtheorem{prop}[thm]{Proposition}
\newtheorem{con}[thm]{Conjecture}
\theoremstyle{definition}

\theoremstyle{remark}
\newtheorem{rem}[thm]{Remark}

\numberwithin{equation}{section}

\newcommand{\fq}{{\mathbb F}_{q}}
\newcommand{\fqq}{{\mathbb F}_{q^2}}
\newcommand{\ftwo}{{\mathbb F}_{2}}
\newcommand{\Tr}{\operatorname{Tr}}

\newcommand{\abs}[1]{\left|#1\right|}
\newcommand{\set}[1]{\left\{#1\right\}}

\begin{document}

\title[A proof of a permutation-inverse conjecture]{A Proof of a Permutation-Inverse Bent-Function Conjecture}
\author{Kaimin Cheng}
\address{School of Mathematical Sciences, China West Normal University, Nanchong 637002, P. R. China}
\email{ckm20@126.com}
\subjclass[2020]{Primary 94D10; Secondary 11T06}
\keywords{Finite fields, Walsh spectra, bent functions, permutation inverses, Hasse congruences}
\date{}

\begin{abstract}
Let $q=2^e$ with $e$ even, and let $\fqq$ be the finite field of order $q^2$. Put $d=(q^2+q+1)/3$, and consider the permutation polynomial
\[
\sigma(X)=X+X^d+X^{dq}\in\fqq[X].
\]
For $\alpha\in\fq^*$, define
\[
f_{\alpha}(x)=\Tr_{q^2}\bigl(\alpha(\sigma^{-1}(x))^3\bigr),\qquad x\in\fqq.
\]
We prove that $f_{\alpha}$ is bent if and only if $\alpha$ is not a cube in $\fq$, thereby proving a conjecture of Li, Li, Helleseth, and Qu. The proof computes the Walsh values on $\fq$ directly and treats the complementary parameters by reducing them to a two-variable exponential sum. A binary Hasse congruence, proved by a finite carry analysis together with a projective-frame cancellation for the only large recurrent component, forces the outside Walsh coefficients in the noncubic case to be $\pm q$. As an application, we identify a recent cyclotomic family of Xie, Li, Wang, and Zeng with the same construction in different coordinates and thereby prove their conjecture.
\end{abstract}

\maketitle

\section{Introduction}

Let $q=2^e$ with $e$ a positive integer, let $\fqq$ denote the finite field of order $q^2$, and write $\Tr_q$ and $\Tr_{q^2}$ for the absolute traces from $\fq$ and $\fqq$ to $\ftwo$, respectively.
For a Boolean function $f$ on $\fqq$, its Walsh transform is
\[
W_f(\beta)=\sum_{x\in\fqq}(-1)^{f(x)+\Tr_{q^2}(\beta x)},\qquad \beta\in\fqq.
\]
The function $f$ is called \emph{bent} if $\abs{W_f(\beta)}=q$ for every $\beta\in\fqq$.
Bent functions are extremal objects in the theory of Boolean functions and coding theory; for background we refer to \cite{CM16,Mes16}.
For constructions defined through inverses of permutation polynomials, however, the inverse map usually conceals the algebraic structure that one would like to exploit in the Walsh transform.
This makes explicit spectral results rare even when the underlying bentness question looks elementary.

We study such a family in the even-characteristic setting.
Assume from now on that $e$ is even, put
\[
d=\frac{q^2+q+1}{3},
\]
and let
\[
\sigma(X)=X+X^d+X^{dq}\in\fqq[X].
\]
Since $e$ is even, we have $q=2^e\equiv 1\pmod 3$. In particular,
\begin{equation}\label{eq:gcd-facts}
\gcd(3,q+1)=1,
\qquad
\gcd(3,q-1)=3.
\end{equation}
We shall use \eqref{eq:gcd-facts} repeatedly without further comment.

Ding, Qu, Wang, Yuan, and Yuan \cite{Ding2015} proved in 2015 that $\sigma$ is a permutation polynomial of $\fqq$.
For each $\alpha\in\fq^*$, define
\begin{equation}\label{eq:def-falpha}
 f_{\alpha}(x)=\Tr_{q^2}\bigl(\alpha(\sigma^{-1}(x))^3\bigr),\qquad x\in\fqq.
\end{equation}
In 2023, Li, Li, Helleseth, and Qu \cite{Li2023} proposed the following conjecture.

\begin{con}\label{con:main}
Let $q=2^e$ with $e$ an even positive integer.
For $\alpha\in\fq^*$, let $f_{\alpha}$ be the Boolean function defined by \eqref{eq:def-falpha}.
Then $f_{\alpha}$ is bent over $\fqq$ if and only if $\alpha$ is not a cube in $\fq$.
\end{con}

Our main result is the following bentness criterion. We keep a little more information than the conjecture asks for: the entire part of the Walsh spectrum indexed by the subfield $\fq$ is computed, and in the noncubic case all outside Walsh values are shown to be $\pm q$.

\begin{thm}\label{thm:main-Fq}
Let $\alpha\in\fq^*$, and let $f_{\alpha}$ be the Boolean function defined by \eqref{eq:def-falpha}.
Then the Walsh values of $f_{\alpha}$ at points of $\fq$ are as follows.
\begin{enumerate}[label=\textup{(\alph*)},leftmargin=*]
\item If $\alpha$ is a cube in $\fq$, then
\[
\set{W_{f_{\alpha}}(\beta):\ \beta\in\fq}=\set{2q,-2q}.
\]
\item If $\alpha$ is not a cube in $\fq$, then
\[
\set{W_{f_{\alpha}}(\beta):\ \beta\in\fq}=\set{q}.
\]
\end{enumerate}
\end{thm}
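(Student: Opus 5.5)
The approach is to move the inverse permutation out of the Walsh sum and reduce to a quadratic form on $\fqq$. Substitute $x=\sigma(y)$, which is legitimate because $\sigma$ permutes $\fqq$. This gives
\[
W_{f_\alpha}(\beta)=\sum_{y\in\fqq}(-1)^{\Tr_{q^2}(\alpha y^3)+\Tr_{q^2}(\beta y)+\Tr_{q^2}(\beta(y^d+y^{dq}))}.
\]
Let $\beta\in\fq$. The element $z=y^d+y^{dq}$ satisfies $z^q=z$, so $z\in\fq$. Hence $\Tr_{q^2}(\beta z)=\Tr_q\bigl(\beta z+(\beta z)^q\bigr)=\Tr_q(2\beta z)=0$. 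Therefore, for $\beta\in\fq$,
\[
W_{f_\alpha}(\beta)=S(\beta):=\sum_{y\in\fqq}(-1)^{Q(y)+\Tr_{q^2}(\beta y)},\qquad Q(y)=\Tr_{q^2}(\alpha y^3).
\]
Since $y^3=y^{2+1}$, $Q$ is a quadratic form over $\ftwo$. The rest of the argument is standard quadratic-form bookkeeping plus one global counting identity.

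\textbf{Step 1: the radical.} The polar form is $B(y,u)=\Tr_{q^2}\bigl(\alpha(y^2u+yu^2)\bigr)=\Tr_{q^2}\bigl(u(\alpha y^2+(\alpha y)^{2^{2e-1}})\bigr)$. So $y$ lies in the radical iff $\alpha y^2=(\alpha y)^{1/2}$, i.e.\ iff $\alpha^2y^4=\alpha y$, i.e.\ iff $y=0$ or $y^3=\alpha^{-1}$.

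Next we decide when $\alpha^{-1}$ is a cube in $\fqq$. Write $\alpha=g^{(q+1)k}$ for a generator $g$ of $\fqq^*$. Then $\alpha$ is a cube in $\fqq$ iff $3\mid (q+1)k$. By \eqref{eq:gcd-facts} this holds iff $3\mid k$, i.e.\ iff $\alpha$ is a cube in $\fq$. Hence the radical $V$ has $\abs{V}=1$ in the noncubic case and $\abs{V}=4$ in the cubic case.

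We also check that the three nonzero radical elements lie in $\fq$. Let $y^3=\alpha^{-1}\in\fq$. Then $y^{3(q-1)}=1$, so $y^q=\omega y$ with $\omega^3=1$. If $\omega\neq1$, then $\Tr_{q^2/q}(y)=(1+\omega)y=\omega^2y$ must lie in $\fq$. Applying Frobenius gives $(\omega^2y)^q=\omega^2\cdot\omega y=y$, which forces $\omega^2y=y$, a contradiction. So $y\in\fq$.

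\textbf{Step 2: absolute values.} In the noncubic case $Q$ is nondegenerate on a $2e$-dimensional space, so $\abs{S(\beta)}=2^e=q$.

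In the cubic case the linear functional $y\mapsto Q(y)+\Tr_{q^2}(\beta y)$ restricted to $V$ must be examined. For $0\neq y\in V$ we have $Q(y)=\Tr_{q^2}(1)=0$. Since $y\in\fq$ and $\beta\in\fq$, also $\Tr_{q^2}(\beta y)=0$. The functional is therefore identically zero on $V$. Standard theory then gives $\abs{S(\beta)}=\sqrt{2^{2e}\cdot 4}=2q$, and in particular $S(\beta)\neq0$.

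\textbf{Step 3: signs via a global sum.} Summing over the subfield,
\[
\sum_{\beta\in\fq}S(\beta)=\sum_{y\in\fqq}(-1)^{Q(y)}\sum_{\beta\in\fq}(-1)^{\Tr_q(\beta(y+y^q))}=q\sum_{y\in\fq}(-1)^{\Tr_{q^2}(\alpha y^3)}=q\cdot q,
\]
because $\Tr_{q^2}$ vanishes on $\fq$.

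In case (b), each of the $q$ values $S(\beta)$ equals $\pm q$ and their sum is $q^2$. This forces $S(\beta)=q$ for every $\beta\in\fq$.

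In case (a), let $N_\pm$ be the number of $\beta\in\fq$ with $S(\beta)=\pm2q$. Then $N_++N_-=q$ and $2q(N_+-N_-)=q^2$. Solving gives $N_+=3q/4$ and $N_-=q/4$. Both are positive since $4\mid q$ (as $e\ge 2$ is even), so both values $\pm2q$ occur.

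\textbf{Main obstacle.} The only delicate point is Step 2 in the cubic case: we need $W\neq0$ rather than merely $W\in\{0,\pm2q\}$. This rests on proving that the cube roots of $\alpha^{-1}$ lie in $\fq$ rather than in $\fqq\setminus\fq$, which is exactly the Frobenius argument given at the end of Step 1.
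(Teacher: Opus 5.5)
Your proof is correct, and it takes a genuinely different and shorter route than the paper.

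\textbf{The paper's route.} It passes through Lemma~\ref{lem:basic-reduction}: the substitution $x\mapsto x^{d'}$ and polar coordinates $x=yz$ with $y\in\fq$, $z\in\mu_{q+1}$. It then uses Lemma~\ref{lem:T-set} and the Artin--Schreier fibres of $x^2+x+\lambda$ to collapse the sum to
\[
\sum_y\chi_q(\alpha\lambda y^3+\beta y)\sum_x\chi_q\bigl(\alpha x^2y^3(1+\alpha y^3)\bigr).
\]
The inner sum kills every $y$ except $0$ and the three roots of $\alpha y^3=1$. This gives the exact formula $W_{f_\alpha}(\beta)=q\bigl(1-\sum_i(-1)^{\Tr_q(\beta y_i)}\bigr)$, which a parity argument then finishes.

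\textbf{Your route.} You notice that for $\beta\in\fq$ the part $y^d+y^{dq}$ of $\sigma$ is $\fq$-valued, hence invisible to $\Tr_{q^2}(\beta\,\cdot)$. So on the subfield, $W_{f_\alpha}$ coincides with the Walsh transform of the Gold function $\Tr_{q^2}(\alpha y^3)$. The absolute values then come from the radical of its polar form, and the signs from the partial Poisson sum $\sum_{\beta\in\fq}W_{f_\alpha}(\beta)=q^2$. Both of your key steps rest on one fact: $\fq$ is its own orthogonal complement under $(x,y)\mapsto\Tr_{q^2}(xy)$.

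\textbf{What each buys.} Your argument is brief and conceptual. It needs neither Lemma~\ref{lem:basic-reduction} nor Lemma~\ref{lem:T-set}. It also explains why the subfield part is easy: part~(b) is essentially the classical bentness of $\Tr_{q^2}(\alpha x^3)$ for noncubes of $\fqq$, combined with your check that noncubes of $\fq$ stay noncubes in $\fqq$.

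What you give up is the location of the signs. You obtain only the multiplicities $3q/4$ and $q/4$ of $2q$ and $-2q$. The paper identifies the set where $W_{f_\alpha}=-2q$ as the explicit codimension-two subspace $\{\beta:\Tr_q(\beta y_1)=\Tr_q(\beta y_2)=0\}$. The paper's polar-coordinate reduction is also the one it reuses for the outside spectrum in Proposition~\ref{prop:outside-reduction}.
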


\begin{thm}
\label{thm:main-outside}
Let $\alpha\in\fq^*$ be a noncube, and let $f_{\alpha}$ be defined by \eqref{eq:def-falpha}. Then
\[
\set{W_{f_{\alpha}}(\beta):\ \beta\in\fqq\setminus\fq}=\{q,-q\}.
\]
\end{thm}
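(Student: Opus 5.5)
The plan is to combine Parseval's identity with Theorem~\ref{thm:main-Fq} and a sharp $2$-adic statement about the outside Walsh values. By Parseval, $\sum_{\beta\in\fqq}W_{f_\alpha}(\beta)^2=q^4$. Since $\alpha$ is a noncube, Theorem~\ref{thm:main-Fq}(b) gives $W_{f_\alpha}(\beta)=q$ for all $q$ points $\beta\in\fq$. Hence
\[
\sum_{\beta\in\fqq\setminus\fq}W_{f_\alpha}(\beta)^2=q^4-q^3=(q^2-q)\,q^2,
\]
so the mean square over the $q^2-q$ outside points is exactly $q^2$. It therefore suffices to prove that $W_{f_\alpha}(\beta)/q$ is an \emph{odd integer} for every $\beta\notin\fq$. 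Then $|W_{f_\alpha}(\beta)|\ge q$ everywhere outside $\fq$, and equality of the average forces $|W_{f_\alpha}(\beta)|=q$. Both signs must then occur. The sum of all Walsh values equals $q^2(-1)^{f_\alpha(0)}=q^2$, since $\sigma(0)=0$. The $\fq$-part already contributes $q\cdot q=q^2$, so the outside values sum to $0$, and $+q$ and $-q$ each appear $(q^2-q)/2$ times.

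\textbf{Step 1: removing $\sigma^{-1}$.} Substituting $x=\sigma(y)$ gives $W_{f_\alpha}(\beta)=\sum_{y}(-1)^{\Tr_{q^2}(\alpha y^3+\beta y+\beta y^d+\beta y^{dq})}$. Since $\Tr_{q^2}(\beta y^{dq})=\Tr_{q^2}(\beta^q y^{d})$, the phase becomes $\Tr_{q^2}(\alpha y^3+\beta y+t y^d)$ with $t=\beta+\beta^q\in\fq$; note $t\neq0$ because $\beta\notin\fq$. Next split $y$ according to its coset modulo cubes. Here $y^{d(q-1)}$ is a cubic character and $y^{3d}=y^{q+2}$. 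Writing $y$ through its norm $y^{q+1}\in\fq$ and a cube-class parameter, I expect to rewrite $W_{f_\alpha}(\beta)$ as a two-variable additive exponential sum over $\fq\times\fqq$ (or over $\fq^2$) of the shape $\sum_{u,v}(-1)^{\Tr(F_\beta(u,v))}$, with $F_\beta$ a polynomial of low $2$-weight degree. The cube classes are permuted by $\alpha$ precisely because $\alpha$ is a noncube, so the three classes combine symmetrically.

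\textbf{Step 2: the Hasse congruence.} Expand the two-variable sum via the Stickelberger/Ax–Katz framework: write $(-1)^{\Tr(\cdot)}$ in terms of Teichmüller lifts and Gauss sums, or equivalently use Hasse's congruence $\sum_{x}(-1)^{\Tr(c x^{k})}$ in terms of binary digit sums of exponents. The $2$-adic valuation of each monomial contribution is governed by the binary weights of the exponents, counted through carries of the $e$-digit binary expansions. I would then show two things. First, every contributing exponent tuple has total weight at least $e$ (giving $q\mid W$). Second, the weight-exactly-$e$ contributions sum to an odd multiple of $q$ modulo $2q$.

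This carry analysis is organized as a finite automaton on digit positions, since the exponents $d$ and $dq$ have periodic binary expansions $0101\cdots$. Minimal-weight configurations then correspond to cycles in a finite transition graph, and all but one strongly connected component contribute an even count or exceed weight $e$.

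\textbf{Main obstacle.} The hard step is the single large recurrent component of that graph. There the number of minimal-weight configurations is not obviously odd, and its contribution is a sum over $\fq$-rational data that must be evaluated exactly mod $2$. My plan is to identify these configurations with points of a projective frame, i.e.\ $4$ points in general position in $\mathbb{P}^2(\fq)$ determined by $\alpha$ and $t$. I would then show that all terms cancel in pairs under an involution (a frame-preserving projectivity), except one fixed term whose parity is controlled by whether $\alpha$ is a cube. The noncube hypothesis should make this fixed term survive, yielding $W_{f_\alpha}(\beta)\equiv q\pmod{2q}$ and completing the argument via the Parseval count above.
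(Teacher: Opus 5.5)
Your endgame is correct and matches the paper's: once every outside Walsh value is known to be an odd multiple of $q$, Parseval and Theorem~\ref{thm:main-Fq}(b) force $|W_{f_\alpha}(\beta)|=q$, and $\sum_{\gamma}W_{f_\alpha}(\gamma)=q^2$ gives both signs. The gap is that this congruence, which is the whole content of the theorem, is not proved.

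Step~1 never produces the sum you intend to expand, and its one concrete claim is false. We have $d(q-1)\equiv(q-1)(q+2)/3\pmod{q^2-1}$ with $\gcd((q+2)/3,q+1)=1$, so $y\mapsto y^{d(q-1)}$ maps $\fqq^*$ onto $\mu_{q+1}$, which has no element of order $3$. It is therefore not a cubic character. Nor can you run the carry analysis with $d$ or $dq$ as exponents. They have about $e$ nonzero binary digits, so the digit sums in the divisibility condition are not bounded independently of $e$, and there is no finite, parameter-free carry graph.

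The paper first linearizes. It replaces $x$ by $x^{d'}$ with $d'=q^2-q+1$, so that $dd'\equiv1\pmod{q^2-1}$. It then writes $x=yz$ with $y\in\fq^*$ and $z\in\mu_{q+1}$, and parametrizes $\mu_{q+1}^*$ by $\fq$ via $z_x=(x+\theta)/\sqrt{A_x+1}$. Finally it clears denominators by $u=Ay$. This yields $W_{f_\alpha}(\beta)+q=\mathcal F_{\kappa,\Lambda}$, a polynomial sum over $\fq^2$ with six fixed exponent vectors. Only then do the carries lie in a fixed rectangle, where an explicit potential function proves the bound $w(U)\ge e$ that you merely assert.

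Your plan for the weight-$e$ term is a guess, and it points the wrong way. That term is an element of $\fq$: a sum of coefficient monomials over closed minimal carry paths. So \emph{even counts} and an involution with one surviving fixed term are not the relevant invariants. In the paper the entire answer $1+\delta^{N/3}+\delta^{2N/3}$ comes from the two small components. The loop $\mathcal C_0$ gives $1$, and the two phases of the two-cycle $\mathcal C_2$ give $\delta^{2N/3}$ and $\delta^{N/3}$. The large component $\mathcal C_1$ contributes exactly $0$.

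That vanishing is not a pairing argument. The closed-walk sum is the trace of a product of transfer matrices. Every two-state block and every collapsed $G_7$ kernel $K_{a,b}$ sends the moving lines spanned by $\binom{\eta_{j+s_i}^{-1}}{1}$ and $\binom{(\eta_{j+s_i}+1)^{-1}}{1}$ in $\fqq^2$ to the corresponding lines. The Artin--Schreier relation $\eta^q+\eta=1$ swaps the two lines after one full period. Hence the total matrix is anti-diagonal and traceless. Walks confined to $G_7$ are handled separately by Lemma~\ref{lem:G7-trace-zero}.

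The noncube hypothesis enters only at the end, through $1+\omega+\omega^2=0$. This gives $\mathcal F_{\kappa,\Lambda}\equiv0$, hence $W_{f_\alpha}(\beta)\equiv-q\equiv q\pmod{2q}$. Your \emph{four points in general position in $\mathbb P^2(\fq)$} has no counterpart in this structure and no argument behind it. As written, the central congruence remains unproved.
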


Together, Theorems~\ref{thm:main-Fq} and~\ref{thm:main-outside} prove the bentness direction for noncubic $\alpha$. The cubic case is already excluded by Theorem~\ref{thm:main-Fq}\textup{(a)}, which gives Walsh coefficients of absolute value $2q$ on the subfield. Thus the paper does not need, and does not claim, a complete outside spectrum in the cubic case.

\begin{cor}\label{cor:distribution}
If $\alpha\in\fq^*$ is not a cube, then the Walsh transform of $f_{\alpha}$ takes the value $q$ with multiplicity $q(q+1)/2$ and the value $-q$ with multiplicity $q(q-1)/2$.
\end{cor}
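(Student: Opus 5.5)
The corollary follows from Theorems~\ref{thm:main-Fq}(b) and~\ref{thm:main-outside} together with two standard identities. By those theorems, every Walsh value of $f_\alpha$ is $\pm q$. So it only remains to count how many are $+q$ and how many are $-q$. Let $N_+$ and $N_-$ be the numbers of $\beta\in\fqq$ with $W_{f_\alpha}(\beta)=q$ and $W_{f_\alpha}(\beta)=-q$, respectively. Then
\[
N_++N_-=q^2 .
\]

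For a second equation I would use the inversion formula
\[
\sum_{\beta\in\fqq}W_{f_\alpha}(\beta)=q^2(-1)^{f_\alpha(0)} .
\]
Here $f_\alpha(0)$ is computed as follows. We have $\sigma(0)=0$, so $\sigma^{-1}(0)=0$, and therefore $f_\alpha(0)=\Tr_{q^2}(0)=0$. This gives $q(N_+-N_-)=q^2$, that is, $N_+-N_-=q$. Solving the two linear equations yields
\[
N_+=\frac{q^2+q}{2}=\frac{q(q+1)}{2},\qquad N_-=\frac{q^2-q}{2}=\frac{q(q-1)}{2},
\]
as claimed.

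As a consistency check, Theorem~\ref{thm:main-Fq}(b) says all $q$ subfield coefficients equal $+q$. Among the $q^2-q$ outside points this leaves $q(q-1)/2$ values equal to $+q$ and $q(q-1)/2$ equal to $-q$, which agrees with Parseval ($\sum W^2=q^4$ holds automatically once every value is $\pm q$). No step here is a real obstacle: the work is entirely in the two theorems being invoked. The only point needing care is the sign $(-1)^{f_\alpha(0)}=1$, which comes from $\sigma(0)=0$.
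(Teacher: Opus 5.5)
Your proposal is correct and follows essentially the same route as the paper: every Walsh value is $\pm q$ by Theorems~\ref{thm:main-Fq}\textup{(b)} and~\ref{thm:main-outside}, and the two linear equations $N_++N_-=q^2$ and $q(N_+-N_-)=\sum_{\beta}W_{f_\alpha}(\beta)=q^2(-1)^{f_\alpha(0)}=q^2$ give the stated multiplicities. Your justification of $f_\alpha(0)=0$ via $\sigma(0)=0$ matches the fact the paper uses.
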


\begin{cor}\label{cor:conj}
Conjecture~\ref{con:main} is true.
\end{cor}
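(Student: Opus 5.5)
The statement to prove is Corollary~\ref{cor:conj}. It follows formally from Theorems~\ref{thm:main-Fq} and~\ref{thm:main-outside}, together with the definition of bentness. The plan is to split by whether $\alpha$ is a cube in $\fq$ and to check the condition $\abs{W_{f_\alpha}(\beta)}=q$ for all $\beta\in\fqq$ separately on the subfield $\fq$ and on its complement $\fqq\setminus\fq$.

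\emph{Cubic case.} Suppose $\alpha$ is a cube in $\fq$. Theorem~\ref{thm:main-Fq}\textup{(a)} shows that the set $\set{W_{f_\alpha}(\beta):\beta\in\fq}$ equals $\set{2q,-2q}$. In particular, some $\beta\in\fq\subseteq\fqq$ has $\abs{W_{f_\alpha}(\beta)}=2q\neq q$, since $q\ge 1$. Hence $f_\alpha$ is not bent. This gives the ``only if'' direction.

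\emph{Noncubic case.} Suppose $\alpha$ is not a cube. Theorem~\ref{thm:main-Fq}\textup{(b)} gives $W_{f_\alpha}(\beta)=q$ for every $\beta\in\fq$. Theorem~\ref{thm:main-outside} gives $W_{f_\alpha}(\beta)\in\set{q,-q}$ for every $\beta\in\fqq\setminus\fq$. Since $\fqq=\fq\cup(\fqq\setminus\fq)$, every Walsh value has absolute value $q$, so $f_\alpha$ is bent. This gives the ``if'' direction.

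Combining the two cases yields Conjecture~\ref{con:main}. Optionally, as a consistency check related to Corollary~\ref{cor:distribution}, one can apply Parseval, $\sum_\beta W_{f_\alpha}(\beta)^2=q^4$, and the identity $\sum_\beta W_{f_\alpha}(\beta)=q^2(-1)^{f_\alpha(0)}$ to confirm the multiplicities. This is not needed for the corollary.

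There is no real obstacle at this step; all the difficulty lies in the two theorems being invoked. The only point needing care is that Theorem~\ref{thm:main-outside} is stated only for noncubes, which is exactly the case where the outside values are needed.
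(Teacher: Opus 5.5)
Your proposal is correct and follows essentially the same route as the paper. In the noncubic case you combine Theorem~\ref{thm:main-Fq}\textup{(b)} with Theorem~\ref{thm:main-outside} to get $\abs{W_{f_\alpha}(\beta)}=q$ for all $\beta\in\fqq$, and in the cubic case you rule out bentness using the values $\pm 2q$ on $\fq$ from Theorem~\ref{thm:main-Fq}\textup{(a)}.
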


The proof splits naturally according to whether the spectral parameter $\beta$ lies in $\fq$ or in $\fqq\setminus\fq$.
The $\beta\in\fq$ regime yields to an elementary finite-field computation.
For $\beta\in\fqq\setminus\fq$, an explicit parametrization of $\mu_{q+1}$ reduces the problem to exponential sums on $\fq$.
After a denominator-free change of variables, the noncubic outside case is controlled by a two-variable Hasse congruence modulo $2q$.
The carry graph of the Hasse term has two small components, which give $1$ and the cubic-character expression $\delta^{(q-1)/3}+\delta^{2(q-1)/3}$, and one large component.
The large component is killed by an Artin--Schreier boundary relation $\eta^q+\eta=1$ through a moving projective frame.

The final section shows that a recent cyclotomic construction of Xie, Li, Wang, and Zeng gives the same Boolean functions after an explicit change of coordinates, not merely equivalent functions.
Thus their conjecture follows from the present result with no further input.

The proof technique is also meant to be reusable. The Hasse term is converted into a finite binary carry graph, the small recurrent components are evaluated directly, and the only large component is cancelled by transporting two conjugate projective frames along the equality graph. The few finite checks used in this step are parameter-free: they involve only the fixed six exponent vectors in the Hasse term and the fixed transition matrices displayed in Section~2. The scripts in Appendix~\ref{app:finite-checks} are part of the verification of these finite assertions, not numerical evidence for particular values of $q$. This gives a way of proving a congruence for a two-variable exponential sum without first determining its complete spectrum.

The paper is organized as follows.
Section~2 develops the finite-field reductions and proves the intrinsic Hasse congruence used for the outside spectrum.
Section~3 analyzes the Walsh values on $\fq$.
Section~4 treats the complementary regime $\fqq\setminus\fq$ in the noncubic case and proves Corollary~\ref{cor:distribution} and Corollary~\ref{cor:conj}.
Section~5 identifies the cyclotomic family of \cite{Xie2025} with the present permutation-inverse model.

\section{Finite-field reductions and the intrinsic Hasse input}

Throughout the sequel, we keep the assumptions that $e$ is even, $q=2^e$, $\Tr_q$ is the absolute trace from $\fq$ to $\ftwo$, and $\Tr_{q^2}$ the absolute trace from $\fqq$ to $\ftwo$.
We write
\[
\chi_q(x)=(-1)^{\Tr_q(x)}\qquad(x\in\fq)
\]
for the canonical additive character of $\fq$.
For $\beta\in\fqq$, write $\bar\beta=\beta^q$.
Let
\[
\mu_{q+1}=\set{z\in\fqq^*:z^{q+1}=1},
\qquad
\mu_{q+1}^*=\mu_{q+1}\setminus\set{1}.
\]
For $\alpha\in\fq^*$ and $\beta\in\fqq$, define
\[
P_{\alpha}(z)=\alpha(z^9+z^{-9}),
\qquad
Q_{\beta}(z)=\beta z^3+\bar\beta z^{-3}+(\beta+\bar\beta)(z+z^{-1}).
\]

\subsection{Finite-field reductions}

\begin{lem}\label{lem:basic-reduction}
For every $\alpha\in\fq^*$ and $\beta\in\fqq$ we have
\begin{equation}
W_{f_{\alpha}}(\beta)=
\sum_{(y,z)\in\fq\times\mu_{q+1}}\chi_q(P_{\alpha}(z)y^3+Q_{\beta}(z)y)-q.
\end{equation}
\end{lem}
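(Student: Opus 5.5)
The idea is to substitute $x=\sigma(u)$, which is allowed because $\sigma$ permutes $\fqq$ \cite{Ding2015}, and then to use the polar decomposition of $\fqq^*$. The substitution gives
\[
W_{f_\alpha}(\beta)=\sum_{u\in\fqq}(-1)^{\Tr_{q^2}(\alpha u^3)+\Tr_{q^2}(\beta\sigma(u))}.
\]
Since $q$ is even, $\gcd(q-1,q+1)=1$, so $\fqq^*=\fq^*\times\mu_{q+1}$ as a direct product. By \eqref{eq:gcd-facts}, cubing is a bijection of $\mu_{q+1}$. Hence every $u\in\fqq^*$ can be written uniquely as $u=yz^3$ with $y\in\fq^*$ and $z\in\mu_{q+1}$. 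The term $u=0$ contributes $1$.

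Next I evaluate $\sigma(yz^3)$, which is the key step. For $z\in\mu_{q+1}$, we have $3d=q^2+q+1\equiv 1\pmod{q+1}$. Therefore $(z^3)^d=z$ and $(z^3)^{dq}=z^q=z^{-1}$.

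For $y\in\fq^*$, write
\[
d-1=\frac{q^2+q-2}{3}=\frac{(q-1)(q+2)}{3}.
\]
Here $(q+2)/3$ is an integer, because $q\equiv1\pmod3$. So $y^{d-1}=\bigl(y^{(q-1)/3}\bigr)^{q+2}$. The base $y^{(q-1)/3}$ is a cube root of unity, and $3\mid q+2$, so $y^{d-1}=1$. This gives $y^d=y$, and also $y^{dq}=y^q=y$.

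Combining these,
\[
\sigma(yz^3)=y\,(z^3+z+z^{-1}).
\]
I expect this exponent bookkeeping to be the only real obstacle. In particular one must check that the cubic-character factor $y^{d-1}$ is trivial and does not introduce a twist.

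Now I rewrite both traces as $\fq$-traces using $\Tr_{q^2}(w)=\Tr_q(w+w^q)$ together with $y^q=y$, $\alpha^q=\alpha$ and $z^q=z^{-1}$. For the first trace,
\[
\Tr_{q^2}(\alpha y^3z^9)=\Tr_q\bigl(\alpha y^3(z^9+z^{-9})\bigr)=\Tr_q\bigl(P_\alpha(z)y^3\bigr).
\]
For the second trace,
\[
\Tr_{q^2}\bigl(\beta y(z^3+z+z^{-1})\bigr)=\Tr_q\bigl(y\bigl(\beta z^3+\bar\beta z^{-3}+(\beta+\bar\beta)(z+z^{-1})\bigr)\bigr)=\Tr_q\bigl(Q_\beta(z)y\bigr).
\]
This uses the symmetry of $z+z^{-1}$ under $z\mapsto z^{-1}$.

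It follows that
\[
W_{f_\alpha}(\beta)=1+\sum_{y\in\fq^*}\sum_{z\in\mu_{q+1}}\chi_q\bigl(P_\alpha(z)y^3+Q_\beta(z)y\bigr).
\]
Finally I extend the sum to include $y=0$. Those terms contribute $\abs{\mu_{q+1}}=q+1$, each equal to $1$, so
\[
W_{f_\alpha}(\beta)=\sum_{(y,z)\in\fq\times\mu_{q+1}}\chi_q\bigl(P_\alpha(z)y^3+Q_\beta(z)y\bigr)-(q+1)+1,
\]
which is the stated identity with the constant $-q$.
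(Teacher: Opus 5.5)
Your proof is correct and is essentially the paper's argument. The paper substitutes the inverse monomial $x\mapsto x^{d'}$ with $d'=q^2-q+1$ and then writes $x=yz$ with $y\in\fq^*$, $z\in\mu_{q+1}$, which is exactly your parametrization $u=(yz)^{d'}=yz^3$; you just verify $\sigma(yz^3)=y(z^3+z+z^{-1})$ directly instead of using $dd'\equiv1\pmod{q^2-1}$.
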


\begin{proof}
Since $\sigma$ is a permutation of $\fqq$, we change variables
$x=\sigma(u)$ in the Walsh transform and then rename $u$ as $x$. Thus
\[
W_{f_{\alpha}}(\beta)
=
\sum_{x\in\fqq}
(-1)^{\Tr_{q^2}(\alpha x^3+\beta(x+x^d+x^{dq}))}.
\]
Let $d'=q^2-q+1$.
Since $\gcd(d',q^2-1)=1$, the monomial $x\mapsto x^{d'}$ permutes $\fqq$.
Also,
\[
 dd'=\frac{(q^2+q+1)(q^2-q+1)}3
    =1+\frac{q^2+2}{3}(q^2-1),
\]
and therefore $dd'\equiv 1\pmod{q^2-1}$.
After replacing the original variable by $x^{d'}$, the term $x^{dq}$ becomes
$x^{dd'q}=x^q$, because $dd'\equiv1\pmod{q^2-1}$. Moreover
\[
\Tr_{q^2}(\beta x^q)=\Tr_{q^2}((\beta x^q)^q)=\Tr_{q^2}(\bar\beta x).
\]
Hence
\begin{align*}
W_{f_{\alpha}}(\beta)
&=1+\sum_{x\in\fqq^*}(-1)^{\Tr_{q^2}(\alpha x^{3d'}+\beta x^{d'}+(\beta+\bar\beta)x)}.
\end{align*}
Note that the map $(y,z)\mapsto yz$ is a bijection from $\fq^*\times \mu_{q+1}$ to $\fqq^*$, so that we can write each nonzero element $x\in\fqq$ uniquely in the form $x=yz$ with
$y\in\fq^*$ and $z\in\mu_{q+1}$.
Since $(yz)^{d'}=yz^3$, we obtain
\begin{align*}
W_{f_{\alpha}}(\beta)-1
&=\sum_{(y,z)\in\fq^*\times\mu_{q+1}}
(-1)^{\Tr_{q^2}(\alpha z^9y^3+\beta z^3y+(\beta+\bar\beta)zy)}\\
&=\sum_{(y,z)\in\fq^*\times\mu_{q+1}}
(-1)^{\Tr_q(P_{\alpha}(z)y^3+Q_{\beta}(z)y)}.
\end{align*}
Adding the $y=0$ terms contributes $q+1$, and therefore
\[
W_{f_{\alpha}}(\beta)
=
\sum_{(y,z)\in\fq\times\mu_{q+1}}
\chi_q(P_{\alpha}(z)y^3+Q_{\beta}(z)y)-q,
\]
as desired. This proves Lemma~\ref{lem:basic-reduction}.
\end{proof}

\begin{lem}\label{lem:T-set}
Let
\[
T=\set{z+z^{-1}:z\in\mu_{q+1}^*}.
\]
Then
\[
T=\set{t\in\fq^*: \Tr_q(1/t)=1}.
\]
In particular, the map $z \mapsto z + z^{-1}$ from $\mu_{q+1}^*$ to $T$ is two-to-one, and therefore $|T| = q/2$.
\end{lem}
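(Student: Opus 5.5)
The plan is to relate $t=z+z^{-1}$ to the quadratic polynomial $X^2+tX+1$ and use the standard solvability criterion for quadratics in characteristic $2$.

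First, take $z\in\mu_{q+1}^*$ and put $t=z+z^{-1}$. Since $z^q=z^{-1}$, we have $t^q=z^{-1}+z=t$, so $t\in\fq$. Moreover $t=0$ would force $z^2=1$, hence $z=1$, which is excluded; so $t\in\fq^*$. The element $z$ is a root of $X^2+tX+1\in\fq[X]$, and $z\notin\fq$: otherwise $z\in\fq^*\cap\mu_{q+1}$, which is trivial because $\gcd(q-1,q+1)=1$. Hence $X^2+tX+1$ is irreducible over $\fq$. Substituting $X=tY$ gives $t^2(Y^2+Y+1/t^2)$. Recall that $Y^2+Y+c$ has a root in $\fq$ if and only if $\Tr_q(c)=0$. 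Therefore irreducibility is equivalent to $\Tr_q(1/t^2)=1$, and since $\Tr_q(1/t^2)=\Tr_q(1/t)$ (Frobenius invariance), this is $\Tr_q(1/t)=1$. This proves $T\subseteq\{t\in\fq^*:\Tr_q(1/t)=1\}$.

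Conversely, let $t\in\fq^*$ with $\Tr_q(1/t)=1$. By the same criterion, $X^2+tX+1$ is irreducible over $\fq$, so it has a root $z\in\fqq\setminus\fq$. Its other root is $z^q$. From the product of the roots, $z^{q+1}=1$, so $z\in\mu_{q+1}$ and $z\ne1$. From the sum of the roots, $z+z^{-1}=z+z^q=t$. This gives the reverse inclusion.

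For the counting statement:
\begin{itemize}
\item The fibre of $t$ under $z\mapsto z+z^{-1}$ is exactly the set of roots of $X^2+tX+1$, namely $\{z,z^{-1}\}$.
\item These two roots are distinct, since $z=z^{-1}$ would give $z^2=1$, i.e.\ $z=1$.
\item Hence the map is two-to-one, and $|T|=|\mu_{q+1}^*|/2=q/2$.
\end{itemize}
Alternatively, $|T|=q/2$ follows because $t\mapsto 1/t$ is a bijection of $\fq^*$ and $\Tr_q$ takes the value $1$ on exactly $q/2$ elements, none of them $0$.

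No step here is a real obstacle. The only point needing care is the reduction of the quadratic to the Artin--Schreier form $Y^2+Y+1/t^2$ together with the identity $\Tr_q(1/t^2)=\Tr_q(1/t)$.
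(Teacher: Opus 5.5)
Your proposal is correct and follows essentially the same route as the paper. You use irreducibility of $X^2+tX+1$ together with the Artin--Schreier trace criterion for one inclusion, Vieta's relations for the converse, and the fibre $\{z,z^{-1}\}$ for the count, and you are slightly more explicit than the paper in checking $t\ne0$, the reduction to $Y^2+Y+1/t^2$, and that $z\ne z^{-1}$.
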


\begin{proof}
Let $t=z+z^{-1}$ with $z\in\mu_{q+1}^*$.
Then $t\in\fq^*$ and $z$ is a root of $X^2+tX+1$.
Moreover $z\notin\fq$: indeed, if $z\in\mu_{q+1}\cap\fq^*$, then $z^{q+1}=z^2=1$, whence $z=1$ in characteristic $2$, contrary to $z\in\mu_{q+1}^*$.
Thus $X^2+tX+1$ is irreducible over $\fq$.
By the Artin--Schreier criterion for quadratic polynomials in characteristic $2$, this is equivalent to
$\Tr_q(1/t^2)=1$, and hence to $\Tr_q(1/t)=1$.
Therefore $T\subseteq\set{t\in\fq^*: \Tr_q(1/t)=1}$.

Conversely, let $t\in\fq^*$ with $\Tr_q(1/t)=1$.
Then also $\Tr_q(1/t^2)=1$, so $X^2+tX+1$ is irreducible over $\fq$.
Let $z\in\fqq\setminus\fq$ be a root.
Its conjugate is $z^q$, and the Vieta relations give
\[
z+z^q=t,
\qquad
zz^q=1.
\]
Hence $z^{q+1}=1$, that is, $z\in\mu_{q+1}^*$, and then $t=z+z^{-1}\in T$.
This proves the claimed equality. For the map $z \mapsto z + z^{-1}$ from $\mu_{q+1}^*$ to $T$, $z$ and $z^{-1}$ have the same image and these are the only two preimages. It implies that this map is two-to-one, and the final claim is immediate. This finishes the proof of Lemma~\ref{lem:T-set}.
\end{proof}

\subsection{A denominator-free outside sum}

For $\delta\in\fq^*$ and $\Lambda\in\fq$ with $\Tr_q(\Lambda)=1$, put
\[
C_\Lambda=\Lambda^4+\Lambda^2+\Lambda
\]
and define the two-variable exponential sum
\begin{equation}\label{eq:def-F-delta-Lambda}
\mathcal F_{\delta,\Lambda}
=
\sum_{h,y\in\fq}
\chi_q\left(
 y(h^3+h^2+\Lambda h)
 +\delta y^3(h^8+h+C_\Lambda)
\right).
\end{equation}

\begin{prop}\label{prop:outside-reduction}
Let $\beta\in\fqq\setminus\fq$ and put $b=\beta+\bar\beta\in\fq^*$.
Choose $\lambda\in\fq$ with $\Tr_q(\lambda)=1$, let $\theta\in\fqq$ satisfy
\[
\theta^2+\theta=\lambda+1,
\qquad \theta^q=\theta+1,
\]
and write uniquely
\[
\beta=b(c+\theta),\qquad c\in\fq.
\]
Set
\[
\kappa=\alpha b^{-3},
\qquad
\Lambda=c^2+c+\lambda.
\]
Then $\Tr_q(\Lambda)=1$ and
\begin{equation}\label{eq:outside-F-delta-Lambda}
W_{f_\alpha}(\beta)+q=\mathcal F_{\kappa,\Lambda}.
\end{equation}
The auxiliary choice of $\lambda$ is only a coordinate choice: for every admissible
choice the displayed identity holds, and the later argument uses only the two
parameters $\kappa$ and $\Lambda$ with $\Tr_q(\Lambda)=1$.
\end{prop}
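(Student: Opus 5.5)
The plan is to start from Lemma~\ref{lem:basic-reduction}, remove the point $z=1$, parametrize $\mu_{q+1}^*$ rationally by $\fq$ using $\theta$, and then rescale $y$ fibrewise in $h$ so that every denominator disappears.

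First, the easy facts. We have $\Tr_q(\Lambda)=\Tr_q(c^2+c)+\Tr_q(\lambda)=0+1=1$. Since $\theta^q=\theta+1$, the pair $\{1,\theta\}$ is an $\fq$-basis of $\fqq$, and $\beta+\bar\beta=b(\theta+\theta^q)=b$; this gives the unique decomposition $\beta=b(c+\theta)$. At $z=1$ we get $P_\alpha(1)=0$ and $Q_\beta(1)=\beta+\bar\beta+0=b\neq0$, so the $z=1$ slice contributes $\sum_y\chi_q(by)=0$. Hence
\[
W_{f_\alpha}(\beta)+q=\sum_{z\in\mu_{q+1}^*}\sum_{y\in\fq}\chi_q\bigl(P_\alpha(z)y^3+Q_\beta(z)y\bigr).
\]

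Next I use the Cayley-type parametrization $z=(h+\theta)/(h+\theta+1)$, $h\in\fq$. Its norm is $z^{q+1}=1$, and $z\ne1$. The map $h\mapsto z$ is injective as a Möbius map, so it is a bijection $\fq\to\mu_{q+1}^*$ by counting ($q$ elements on each side). Put
\[
N(h)=(h+\theta)(h+\theta+1)=h^2+h+\lambda+1\in\fq .
\]
Then $N(h)\neq0$ for all $h\in\fq$, because $\theta\notin\fq$. Squaring gives $z+z^{-1}=\bigl((h+\theta)^2+(h+\theta+1)^2\bigr)/N=1/N$.

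In characteristic $2$ the Dickson identities give $z^3+z^{-3}=t^3+t$ with $t=1/N$, and
\[
z^9+z^{-9}=(z^3+z^{-3})^3+(z^3+z^{-3}).
\]
For $Q_\beta$ I write $\beta z^3+\bar\beta z^{-3}=b\bigl[(c+\theta)(z^3+z^{-3})+z^{-3}\bigr]$ and expand $z^{\pm3}$ over the common denominator $N^3$. This should yield
\[
Q_\beta(z)=b\,L(h)/N^3
\]
for an explicit $L\in\fq[h]$ whose $\theta$-dependence collapses to $\lambda$ through $\theta^2+\theta=\lambda+1$. Similarly $P_\alpha(z)=\alpha\,M(h)/N^9$ with $M\in\fq[h]$ explicit.

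Then, for each fixed $h$, I substitute $y\mapsto y\cdot R(h)/b$, where $R(h)\in\fq^*$ is chosen as a power of $N$ times a suitable factor, so that the linear coefficient becomes a monic cubic. This substitution is a bijection of $\fq$ for each $h$ because $R(h)\neq0$. The cubic coefficient then becomes $\alpha b^{-3}$ times a polynomial, which explains $\kappa=\alpha b^{-3}$. Finally I translate $h\mapsto h+c$, which is again a bijection of $\fq$. This translation turns $\lambda$ into $\Lambda=c^2+c+\lambda$, since
\[
N(h+c)=h^2+h+\Lambda+1 .
\]
It should also bring the linear and cubic coefficients to $h^3+h^2+\Lambda h$ and $h^8+h+C_\Lambda$ respectively. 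If the natural linear coefficient differs from $h^3+h^2+\Lambda h$ by an $\fq^*$-valued factor $g(h)$, I absorb $g(h)$ into $R(h)$ as well. The conclusion \eqref{eq:outside-F-delta-Lambda} then holds for every admissible $\lambda$, because no step used anything about $\lambda$ beyond $\Tr_q(\lambda)=1$.

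The main obstacle is the explicit polynomial bookkeeping. I need to find the right rescaling $R(h)$ and check that, after it, the cubic coefficient really collapses to $h^8+h+C_\Lambda$ of degree $8$, rather than to a degree-$9$ expression such as $N^9D_9(1/N)$. I would first compute $L$ and $M$ exactly, factor both over $\fq[h]$, and look for a common factor that the substitution can cancel.
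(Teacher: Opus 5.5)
There is a genuine gap, in the final step. Your preliminary steps are fine: $\Tr_q(\Lambda)=1$, the decomposition $\beta=b(c+\theta)$, the vanishing $z=1$ slice, the bijection $h\mapsto z_h=(h+\theta)/(h+\theta+1)$ from $\fq$ onto $\mu_{q+1}^*$, and $z_h+z_h^{-1}=1/N(h)$. The problem is that rescaling plus the translation $h\mapsto h+c$ cannot reach $\mathcal F_{\kappa,\Lambda}$.

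Carry out the bookkeeping you postponed, with $A=N(h)+1=h^2+h+\lambda$. One finds
\[
s:=z_h^3+z_h^{-3}=\frac{A^2}{N(h)^3},\qquad P_\alpha(z_h)=\alpha(s^3+s),\qquad Q_\beta(z_h)=b\,s\,(h^2+c+\lambda).
\]
Now substitute $u=bsy$, translate $h\mapsto h+c$ (so that $A=h^2+h+\Lambda$ and $1+s^{-2}=A^2+A^{-2}+A^{-4}$), and put $u=A^2v$. The summand becomes
\[
\chi_q\bigl(\kappa(h^8+h+C_\Lambda)^2v^3+(h^2+\Lambda)(h^2+h+\Lambda)^2v\bigr).
\]
The cubic coefficient is therefore a square of degree $16$, not a degree-$8$ or degree-$9$ expression. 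The linear coefficient vanishes at $h=\Lambda^{1/2}$, whereas $h^3+h^2+\Lambda h=h(h^2+h+\Lambda)$ vanishes only at $h=0$. Multiplying by a nowhere-vanishing $R(h)$ preserves that zero set, so no choice of $R(h)$ works. Indeed, the factor $g(h)$ you propose to absorb would have a zero at $h=0$ and a pole at $h=\Lambda^{1/2}$. The two summands are related not by an affine change of $h$ but by a Frobenius twist.

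The missing idea is that your Cayley coordinate is the Frobenius square of the right one. The additional substitution $h'=h^2+\Lambda$, which is a bijection of $\fq$, gives
\[
h'^2+h'+\Lambda=(h^2+h+\Lambda)^2,\qquad h'^8+h'+C_\Lambda=(h^8+h+C_\Lambda)^2.
\]
Hence $h'^3+h'^2+\Lambda h'=(h^2+\Lambda)(h^2+h+\Lambda)^2$, and the sum becomes exactly $\mathcal F_{\kappa,\Lambda}$.

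Equivalently, parametrize $\mu_{q+1}^*$ by a square root of your Cayley point, $z_x=(x+\theta)/\sqrt{A_x+1}$ with $A_x=x^2+x+\lambda$; your $z_h$ is this $z_x$ at $x=h^2+\lambda+1$. For this parametrization one has $Q_\beta(z_x)=b(c+x+1)s_x$ and $P_\alpha(z_x)=\alpha(s_x^3+s_x)$, where $s_x=A_x(A_x+1)^{-3/2}$. The substitutions $u=bs_xy$, $h=x+c+1$ and $u=(h^2+h+\Lambda)y$ then land directly on $\mathcal F_{\kappa,\Lambda}$. This is the paper's route.
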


\begin{proof}
Since $e$ is even, $\Tr_q(1)=0$; hence $\Tr_q(\lambda+1)=1$, and
$X^2+X+\lambda+1$ is irreducible over $\fq$. Thus such a $\theta$ exists and
satisfies $\theta^q=\theta+1$.

For $x\in\fq$, put
\[
A_x=x^2+x+\lambda,
\qquad
z_x=\frac{x+\theta}{\sqrt{A_x+1}},
\qquad
s_x=z_x^3+z_x^{-3}.
\]
Here and below, for an element of $\fq$, the notation $r^{1/2}$ or $\sqrt r$
means the unique square root in $\fq$, namely $r^{q/2}$.
The elements $A_x$ and $A_x+1$ are nonzero, because $x^2+x$ has absolute
trace zero while $\lambda$ and $\lambda+1$ have absolute trace one. Thus all
square roots and inverses used in the definition of $z_x$ are legitimate.
Moreover,
\[
(x+\theta)(x+\theta^q)=A_x+1,
\]
so $z_x\in\mu_{q+1}^*$ and
\[
z_{x+1}=z_x^{-1},
\qquad
z_x+z_x^{-1}=(A_x+1)^{-1/2}.
\]
If $z_x=z_y$, then the displayed formula for $z_x+z_x^{-1}$ gives
$(A_x+1)^{-1/2}=(A_y+1)^{-1/2}$. Since the square-root map is an
automorphism of $\fq$, this implies $A_x=A_y$, so $y\in\{x,x+1\}$. The
second option would give $z_x=z_{x+1}=z_x^{-1}$ and hence $z_x=1$, impossible.
Therefore $x\mapsto z_x$ is injective, and hence a bijection from $\fq$ onto
$\mu_{q+1}^*$ because both sets have $q$ elements.

From $s_x=(z_x+z_x^{-1})^3+(z_x+z_x^{-1})$ one obtains
\[
s_x=A_x(A_x+1)^{-3/2},
\qquad
1+s_x^{-2}=A_x+A_x^{-1}+A_x^{-2}.
\]
In particular $s_x\ne0$ for every $x$, so the later change of variable by
$bs_x$ is a bijection of $\fq$.
Write
\[
\Phi(x)=A_x+A_x^{-1}+A_x^{-2}.
\]
We record the two elementary identities used in the sequel. First, in characteristic $2$,
\[
(z^3+z^{-3})^3+(z^3+z^{-3})=z^9+z^{-9},
\]
and hence $P_\alpha(z_x)=\alpha(s_x^3+s_x)$. Second, since
$\beta=b(c+\theta)$ and $\bar\beta=b(c+\theta+1)$, we have
\[
\begin{aligned}
 b^{-1}Q_\beta(z_x)
 &=c(z_x^3+z_x^{-3})+\theta z_x^3+(\theta+1)z_x^{-3}+z_x+z_x^{-1}.
\end{aligned}
\]
Writing $r=(A_x+1)^{1/2}$, so that $z_x=(x+\theta)/r$ and
$z_x^{-1}=(x+\theta+1)/r$, the last three terms have common denominator
$r^3$. Their numerator is
\begin{align*}
&\theta(x+\theta)^3+(\theta+1)(x+\theta+1)^3+(A_x+1) \\
&\qquad =x^3+x^2+\lambda x+1+x^2+x+\lambda+1 \\
&\qquad =x^3+(\lambda+1)x+\lambda=(x+1)(x^2+x+\lambda)=(x+1)A_x .
\end{align*}
Here only $\theta^2+\theta=\lambda+1$ and $A_x=x^2+x+\lambda$ are used; the
first equality is a direct expansion in characteristic two. Since $s_x=A_x/r^3$,
this gives
\[
P_\alpha(z_x)=\alpha(s_x^3+s_x),
\qquad
Q_\beta(z_x)=b(c+x+1)s_x.
\]
By Lemma~\ref{lem:basic-reduction}, the term $z=1$ contributes
$\sum_{y\in\fq}\chi_q(by)=0$. Using the bijection $x\mapsto z_x$ and then the
change of variable $u=bs_xy$, we get
\begin{align*}
W_{f_\alpha}(\beta)+q
&=\sum_{x,y\in\fq}
\chi_q\bigl(\alpha(s_x^3+s_x)y^3+b(c+x+1)s_xy\bigr)\\
&=\sum_{x,u\in\fq}
\chi_q\bigl(\kappa\Phi(x)u^3+(c+x+1)u\bigr).
\end{align*}
Now set
\[
h=x+c+1,
\qquad
\Lambda=c^2+c+\lambda.
\]
Then $\Tr_q(\Lambda)=\Tr_q(\lambda)=1$ and
\[
A_x=x^2+x+\lambda=h^2+h+\Lambda.
\]
Let $A=h^2+h+\Lambda$. Since $\Tr_q(\Lambda)=1$, $A\ne0$ for every $h\in\fq$:
otherwise $h^2+h=\Lambda$ would force $\Tr_q(\Lambda)=0$. Hence the change of
variable $u=Ay$ is a bijection on $\fq$ for each fixed $h$. Then
\[
\Phi(x)u^3=(A+A^{-1}+A^{-2})A^3y^3=(A^4+A^2+A)y^3,
\]
where
\[
A^4+A^2+A=h^8+h+\Lambda^4+\Lambda^2+\Lambda=h^8+h+C_\Lambda.
\]
Also
\[
(c+x+1)u=hAy=y(h^3+h^2+\Lambda h).
\]
Substitution gives exactly \eqref{eq:def-F-delta-Lambda} with $\delta=\kappa$.
This proves \eqref{eq:outside-F-delta-Lambda}.
\end{proof}

\subsection{The intrinsic Hasse congruence}

The following congruence is the key input for the outside sums in the noncubic case.

\begin{prop}\label{prop:intrinsic-hasse}
Let $q=2^e$ with $e$ even, and put $N=q-1$.
Let $\delta\in\fq^*$ and let $\Lambda\in\fq$ satisfy $\Tr_q(\Lambda)=1$.
With $C_\Lambda=\Lambda^4+\Lambda^2+\Lambda$, one has
\begin{equation}\label{eq:intrinsic-hasse-congruence}
\mathcal F_{\delta,\Lambda}
\equiv
q\left(1+\delta^{N/3}+\delta^{2N/3}\right)
\pmod {2q}.
\end{equation}
Here $1+\delta^{N/3}+\delta^{2N/3}\in\ftwo$, and it is identified with the integer $0$ or $1$ in the congruence.
\end{prop}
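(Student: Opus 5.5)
The plan is to express $\mathcal F_{\delta,\Lambda}$ through a Stickelberger/Hasse-type formula modulo $2q$ and then analyse the resulting finite-field element. First, the additive character is written as $\chi_q(a)=(-1)^{\Tr_q(a)}$. Next, the sum over $y$ is expanded using the standard identity for sums of $(-1)^{\Tr}$ of polynomials. Reducing modulo $2q$ amounts to working modulo $2$ after dividing by $q$. The relevant tool is the classical fact (Ax/Katz type, or directly via $\sum_{x\in\fq}(-1)^{\Tr(g(x))}=q-2\#\{x:\Tr g(x)=1\}$) that
\[
\sum_{h,y\in\fq}\chi_q(G(h,y))\equiv q^2-2\,\#\{(h,y):\Tr_q G(h,y)=1\}\pmod{2q}.
\]
The count $\#\{\Tr_q G=1\}$ modulo $q$ can be computed by Chevalley--Warning/Ax-style counting. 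Concretely, one writes $\Tr_q G=\sum_{i<e}G^{2^i}$ and uses $\sum_{x\in\fq}x^k=-[k>0,\,(q-1)\mid k]$. This gives $\#\{\Tr G=1\}\bmod 2$ as the coefficient sum of monomials $h^{a}y^{b}$ with $a,b$ positive multiples of $N$ in the expansion of $(\Tr_q G)^{?}$. The cleaner route is the Hasse-type formula: $\mathcal F/q \bmod 2$ equals a sum over monomials of $\prod$ of $2$-power conjugates of $G$ with total exponents $\equiv 0 \pmod N$. So the first step is to fix the precise Hasse congruence: $\mathcal F_{\delta,\Lambda}/q\equiv \sum$ over the "carry-free" products of $e$ Frobenius twists of the six terms of $G=yh^3+yh^2+\Lambda yh+\delta y^3h^8+\delta y^3h+\delta C_\Lambda y^3$, selecting those with exponent vector $(N,N)$ (or $(0,0)$ contributions handled separately).

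Second, I would encode this selection as a walk. Each of the $e$ binary digit positions chooses one of the six exponent vectors $v_j\in\{(1,3),(1,2),(1,1),(3,8),(3,1),(3,0)\}$ (in $(y,h)$ coordinates), shifted by $2^i$. The requirement that the $2$-adic sums produce exactly $(N,N)=(11\cdots1,11\cdots1)_2$ becomes a cyclic carry condition. This is a finite automaton whose states are carry pairs, bounded since the entries are at most $8$. The congruence value is then $\mathrm{tr}$ of the product of $e$ transfer matrices with entries in $\ftwo[\delta,\Lambda]$, with the $i$th matrix carrying Frobenius twists $\delta^{2^i},\Lambda^{2^i}$. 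I would decompose the carry graph into strongly connected components. The components consisting of the all-$(1,\ast)$ patterns should contribute $1$, coming from the $h$-linear part. The components forcing $y$-exponent $3$ uniformly should contribute $\prod_i \delta^{2^i\cdot(\text{something})}$, which sums to $\delta^{N/3}+\delta^{2N/3}$; this works because $3\mid N$ lets a period-two pattern of $y^3$ digits produce exponent $N/3$ or $2N/3$.

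The main obstacle is the remaining large recurrent component, whose contribution must vanish. It involves $\Lambda$ and $C_\Lambda$ in a nontrivial, non-multiplicative way. Here I would use $\Tr_q(\Lambda)=1$ through an element $\eta\in\fqq$ with $\eta^2+\eta=\Lambda$, so that $\eta^q=\eta+1$. I would try to show that the twisted transfer matrices $M_i$ are conjugate, via a matrix $S(\eta^{2^i})$ whose columns form a projective frame, to triangular matrices: $M_i=S_{i+1}^{-1}D_iS_i$. The trace of the cyclic product then telescopes to $\mathrm{tr}(S_0^{-1}\cdots)$ with the boundary mismatch $S(\eta^q)=S(\eta+1)$ versus $S(\eta)$. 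The relation $\eta^q+\eta=1$ should then force the two conjugate frames to contribute equal terms, which cancel in characteristic $2$. Finding the right frame $S(\eta)$, namely eigenvector directions of the Frobenius-twisted transfer matrix solvable by Artin--Schreier equations, is the step I expect to be hardest. I would first guess it by diagonalising the component's matrix symbolically and checking the finitely many parameter-free identities. Adding the three contributions yields $q(1+\delta^{N/3}+\delta^{2N/3})\bmod 2q$.
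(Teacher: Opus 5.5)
Your outline follows the paper's architecture: the first Stickelberger term, a binary carry automaton, three recurrent components, and an element $\eta$ with $\eta^q=\eta+1$ exchanging two conjugate frames. But two load-bearing steps are missing or set up so that they would fail.

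\textbf{The first Hasse term.} The Stickelberger expansion has to be applied monomial by monomial, via $\chi_q(\sum_i t_i)=\prod_i\chi_q(t_i)$. So at the $j$-th binary digit you choose an \emph{arbitrary subset} $m_j\subseteq\{1,\dots,6\}$ of monomials, which may be empty or have several elements. The weight is $\sum_j|m_j|$, and the coordinate sums need only be positive multiples of $N$ (they equal $N\phi_0$), not exactly $(N,N)$.

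Your model, with one monomial of $G^{2^i}$ per digit, cannot produce the answer. Let $S$ be the set of digits carrying a $y^3$-monomial. Then the $\delta$-exponent is $k=\sum_{j\in S}2^j\in[0,N]$ and the $y$-degree is $N+2k$. This is divisible by $N$ only for $k\in\{0,N\}$, so every surviving term would be independent of $\delta$. In fact $\delta^{N/3}+\delta^{2N/3}$ comes from the two-cycle $(3,2)\xrightarrow{\{4,5\}}(6,4)\xrightarrow{\varnothing}(3,2)$, which alternates two monomials with none. The $1$ comes from the loop $u_1=N$ on $h^3y$; the $\Lambda hy$ and $h^2y$ loops lie in the large component.

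More seriously, you never prove that every admissible $U$ has $w(U)=\sum_i s_2(u_i)\ge e$. Without this, lower-weight terms survive and $\mathcal F_{\delta,\Lambda}$ need not even be divisible by $q$, so working with $\mathcal F/q \bmod 2$ is unjustified. The paper gets the bound from a potential $\nu$ on the carry rectangle $1\le\phi_1\le15$, $1\le\phi_2\le12$ with $|m|-1-\nu(\phi)+\nu(\phi')\ge0$ on every edge. The equality edges of that inequality are exactly what define the three components.

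\textbf{The large component.} Triangular blocks preserve only one line. Suppose each block sends $\langle p_{j+1}\rangle$ into $\langle p_j\rangle$ only. After the boundary swap $p_e=\bar p_0$, $\bar p_e=p_0$, the trace of the total return matrix becomes the off-diagonal entry of the triangular product, which need not vanish. What is needed, and what Lemma~\ref{lem:two-state-frame} checks, is that every block preserves \emph{both} moving lines $\langle p_j\rangle$ and $\langle\bar p_j\rangle$. Then the total $2\times2$ matrix is anti-diagonal in the basis $(p_0,\bar p_0)$, and its trace is zero outright rather than by pairing equal terms.

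The $20$-state component also contains the eight-state group $G_7$, which carries no such two-line frame, so symbolically diagonalising the whole transfer matrix will not work. The paper handles $G_7$ in three separate ways:
\begin{enumerate}
\item Excursions through $G_7$ are collapsed into $2\times2$ kernels $K_{a,b}$. These respect the frames, via an invariant three-dimensional subspace and an annihilating covector (Lemma~\ref{lem:G7-kernel-frame}).
\item Walks that never leave $G_7$ are killed by a trace-zero algebra for products of $P+CQ$ (Lemma~\ref{lem:G7-trace-zero}).
\item $G_7$-based walks that do exit are handled by rotating the base point to the first $G_3$ state reached.
\end{enumerate}
Your plan has no counterpart to any of these.
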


The proof occupies the rest of this subsection. The computer-assisted input is
limited to four parameter-free finite certificates, all recorded in
Appendix~\ref{app:finite-checks}. The first certificate verifies the potential
inequality and lists the equality-cycle components used in
Lemma~\ref{lem:minimal-carry-graph}. The second rebuilds, from the same equality
graph, the weighted transition blocks displayed before
Lemma~\ref{lem:two-state-frame}. The third verifies the trace-zero algebra used
in Lemma~\ref{lem:G7-trace-zero}. The final SageMath check verifies the rational
projective identities used in Lemma~\ref{lem:G7-kernel-frame}. None of these
checks depends on $q$, $e$, $\delta$, or $\Lambda$; they are finite certificates
for the stated combinatorial and matrix assertions, not numerical evidence for
special finite fields.
For the polynomial in \eqref{eq:def-F-delta-Lambda}, write
\[
E_{\delta,\Lambda}(h,y)
=h^3y+h^2y+\Lambda hy+\delta h^8y^3+\delta hy^3+\delta C_\Lambda y^3.
\]
Since $\Tr_q(\Lambda)=1$, we have $\Lambda\ne0$ and also $C_\Lambda\ne0$.
Indeed, if $C_\Lambda=0$, then $\Lambda^3+\Lambda+1=0$. A root of
$X^3+X+1$ lying in $\mathbb F_{2^e}$ can occur only when $3\mid e$, and then it
lies in $\mathbb F_8$. Its trace from $\mathbb F_8$ to $\ftwo$ is zero, because
the coefficient of $X^2$ in its minimal polynomial is zero. Hence its absolute
trace from $\mathbb F_{2^e}$ to $\ftwo$ is $(e/3)\cdot0=0$, contradicting
$\Tr_q(\Lambda)=1$.

Let
\begin{align}\label{degreeofE}
d_1=(3,1),\ d_2=(2,1),\ d_3=(1,1),\ d_4=(8,3),\ d_5=(1,3),\ d_6=(0,3)
\end{align}
be the degree vectors of terms in $E_{\delta,\Lambda}$
with coefficients
\[
a_1=1,\quad a_2=1,\quad a_3=\Lambda,
\qquad
 a_4=\delta,\quad a_5=\delta,\quad a_6=\delta C_\Lambda.
\]
For $U=(u_1,\ldots,u_6)$ with $0\le u_i\le N$, define
\[
w(U)=\sum_{i=1}^6 s_2(u_i),
\]
where $s_2$ denotes the binary digit sum. Let $\mathcal M_e$ be the set of all such $U$ satisfying
\begin{equation}\label{eq:modular-system}
\sum_{i=1}^6u_id_i\equiv(0,0)\pmod N,
\end{equation}
with both actual coordinate sums positive and with $w(U)=e$.

The following elementary first-term lemma will be used in the proof of Proposition \ref{prop:intrinsic-hasse}. It is the binary specialization of the usual Gauss-sum--Stickelberger
calculation; the Gauss-sum congruence used below is the standard
Stickelberger congruence, for instance in the form of
\cite[Theorem~11.2.1]{BEW1998}. For the classical cyclotomic formulation of
Stickelberger's theorem, see also \cite[Theorem~6.10]{Washington1997}. We spell
out the specialization in order to fix the endpoint conventions. In particular,
the two exponents $0$ and $N=q-1$ are not identified: $T^N$ is the indicator of
$T\ne0$ on $\fq$, whereas $T^0$ is the constant function.

\begin{lem}\label{lem:binary-hasse-term}
Let $q=2^e$, $N=q-1$, and let
\[
H(X,Y)=\sum_{i=1}^r a_iX^{D_{i,1}}Y^{D_{i,2}}\in\fq[X,Y]
\]
with all $a_i\ne0$. For $U=(u_1,\ldots,u_r)$ with $0\le u_i\le N$, put
\[
M_1(U)=\sum_i u_iD_{i,1},\qquad
M_2(U)=\sum_i u_iD_{i,2},\qquad
w(U)=\sum_i s_2(u_i).
\]
Assume that every $U$ for which $M_1(U)$ and $M_2(U)$ are positive multiples of
$N$ satisfies $w(U)\ge e$. Then
\[
\sum_{X,Y\in\fq}\chi_q(H(X,Y))
\equiv
q\sum_{\substack{0\le u_i\le N\\ M_1(U),M_2(U)>0\\ M_1(U)\equiv M_2(U)\equiv0\ (N)\\ w(U)=e}}
\prod_{i=1}^r a_i^{u_i}
\pmod{2q}.
\]
The sum on the right is first interpreted in $\fq$; its residue is
Frobenius-fixed in the proof below, hence lies in $\ftwo$, and is then
identified with the integer $0$ or $1$.
\end{lem}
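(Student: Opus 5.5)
Our plan is to expand the additive character in multiplicative characters via Gauss sums and then reduce modulo $2q$ with Stickelberger's congruence. Fix a generator $\omega$ of the Teichmüller characters of $\fq^*$. Let $\pi$ be the prime of $\mathbb Z_2[\zeta_N]$ above $2$ with $\pi^e\sim 2\cdots$; more precisely, work in $\mathbb Z_2[\zeta_{2N}]$, which is unramified of residue field $\fq$, so that the uniformizer is $2$ itself. For $T\in\fq$ write
\[
\chi_q(T)=\sum_{j=0}^{N}c_j\,\omega^{-j}(T),\qquad c_j=\frac{1}{q-1}G(\omega^{j}),\ (1\le j\le N-1),
\]
and treat the endpoint terms $c_0$, $c_N$ explicitly. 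Here $T^0$ is the constant function and $T^N$ is the indicator of $T\neq0$, as the statement insists. Then Stickelberger gives $G(\omega^{-u})\equiv -\,2^{s_2(u)}/\prod(\text{digit factorials})$. For binary digits the factorials are all $1$, so we get $G(\omega^{-u})= -2^{s_2(u)}\cdot(\text{unit}\equiv1 \bmod 2)$, and hence the Gauss sum attached to exponent $u$ is $2^{s_2(u)}$ times a $1$-unit.

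Applying this to each monomial $a_iX^{D_{i,1}}Y^{D_{i,2}}$, we expand $\chi_q(H)=\prod_i\chi_q(a_iX^{D_{i,1}}Y^{D_{i,2}})$. Each factor is a sum over $u_i\in\{0,\dots,N\}$ of $g(u_i)\,\omega(a_i)^{u_i}\,\omega(X)^{u_iD_{i,1}}\omega(Y)^{u_iD_{i,2}}$, with $g(u)$ the normalized Gauss sum, of $2$-adic valuation $s_2(u)$ and leading unit $\equiv1$ (or $-1$; only parity matters modulo $2q$). Summing over $X,Y\in\fq$ uses orthogonality. Terms with $M_1(U),M_2(U)$ positive multiples of $N$ give $(q-1)^2\prod g(u_i)\omega(a_i)^{u_i}$. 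Terms where some $M_k=0$ correspond to all $u_i$ with $D_{i,k}\ne0$ vanishing. These must be organized so that their total equals the part of the sum over $X=0$ or $Y=0$, which we compute separately. The cleaner route is therefore to split $\sum_{X,Y\in\fq}$ into $X,Y\in\fq^*$ plus the axes. The axis sums are one-variable character sums of restricted monomials, each either $0$ or $\pm q$-type, and they cancel the $M_k=0$ configurations. We expect the precise bookkeeping of this endpoint split, in particular the convention $0\ne N$, to be fiddly but routine.

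Next we reduce modulo $2q$. Every surviving $U$ has valuation $w(U)\ge e$ by hypothesis, so the terms with $w(U)>e$ vanish modulo $2q$. The terms with $w(U)=e$ contribute $q\cdot\prod\omega(a_i)^{u_i}$ modulo $2q$, since $(q-1)^2\equiv1$ and the $1$-units are $\equiv1 \pmod 2$. Reducing $\omega(a)$ modulo $2$ gives $a$, so the residue is $q\cdot\overline{S}$ with $S=\sum\prod a_i^{u_i}\in\fq$.

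It remains to show that this residue lies in $\ftwo$. The left side is a rational integer, so $S$ modulo $2$ must be Frobenius-fixed. Directly, doubling $U\mapsto 2U$ (cyclic digit rotation modulo $N$) preserves $w$ and the congruence conditions, and it maps the summand to its square, so $S^2=S$. We expect the main obstacle to be justifying that the Gauss-sum units are exactly $\equiv1$ modulo $2$ with no sign ambiguity, which requires the normalization of Stickelberger in \cite[Theorem~11.2.1]{BEW1998}, together with the endpoint handling for $u_i\in\{0,N\}$.
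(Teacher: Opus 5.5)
Your strategy is the paper's: interpolate $\chi_q$ on all of $\fq$ by Teichm\"uller powers $\widehat t^{\,u}$, $0\le u\le N$, use Stickelberger to get $b_u=2^{s_2(u)}\cdot(\text{unit}\equiv1\bmod 2)$, expand over the monomials, sum by orthogonality, and keep the weight-$e$ terms. Your rotation argument $U\mapsto 2U$, which gives $S^2=S$ directly, is a valid substitute for the paper's integrality argument.

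The gap is your treatment of the terms with $M_1(U)=0$ or $M_2(U)=0$. You propose to split off the coordinate axes, claiming the axis sums are ``$0$ or $\pm q$-type'' and cancel these configurations. That is false in general. In the paper's application $E_{\delta,\Lambda}$ contains the monomial $\delta C_\Lambda y^3$ with degree vector $(0,3)$. So the axis $h=0$ gives $\sum_{y\in\fq}\chi_q(\delta C_\Lambda y^3)$, and by the purity of cubic Gauss sums over $\fq$ (Stickelberger, since $2\equiv-1\pmod 3$ and $e$ is even) this has absolute value $2^{e/2+1}$ or $2^{e/2}$.

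Your preceding sentence is also off. With $\widehat0^{\,0}=1$ and $\widehat 0^{\,u}=0$ for $u>0$, the interpolation is exact at $0$. So the $M_1(U)=0$ terms are not ``the part of the sum over $X=0$''; the term $U=0$ alone equals $q^2$. If you really restrict the expansion to $\fq^*\times\fq^*$, the exponents $0$ and $N$ merge, and the $U=0$ term becomes the unit $N^2$. Everything would then have to be recovered from the axes by a separate one-variable Hasse computation that you have not supplied. Note also that the term $U=(N,0,\dots,0)$, which later yields the contribution $1$ of $\mathcal C_0$, exists only because $\widehat t^{\,0}$ and $\widehat t^{\,N}$ are kept distinct.

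No splitting is needed. In the full expansion a term with $M_1(U)=0$ carries the factor $\sum_{X\in\fq}\widehat X^{\,0}=q$, while the $Y$-sum is $q$, $N$ or $0$. Either $U=0$, giving $q^2\equiv0\pmod{2q}$, or some $u_i\ge1$. In the latter case $\prod_i b_{u_i}$ has $2$-adic valuation at least $1$, so the term is divisible by $2q$. The case $M_2(U)=0$ is symmetric.

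You should also compute the endpoint coefficient rather than defer it. We have $b_0=\chi_q(0)=1$, and $b_0+b_N=N^{-1}\sum_{t\ne0}\chi_q(t)=-1/N$, so $b_N=-q/N$. Its valuation is $e=s_2(N)$ and its unit part $1/(1-q)$ is $\equiv1\pmod 2$. Stickelberger covers only $1\le u\le N-1$, so this computation is what makes $2^{-s_2(u)}b_u\equiv1\pmod 2$ hold uniformly on $0\le u\le N$.
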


\begin{proof}
Work in the ring $\mathcal O$ of integers of the unramified $2$-adic extension
with residue field $\fq$, and write hats for Teichm{\"u}ller lifts. The function
$t\mapsto\chi_q(t)$ on $\fq$ has a unique interpolation on Teichm{\"u}ller
representatives of the form
\[
\chi_q(t)=\sum_{u=0}^{N} b_u\widehat t^{\,u},
\qquad \widehat0^0=1,
\]
where $\widehat 0^{\,u}=0$ for $u>0$. Uniqueness follows because the
multiplicative characters $t\mapsto\widehat t^{\,u}$, $0\le u<N$, are linearly
independent on $\fq^*$, and the additional function $t\mapsto\widehat t^{\,N}$
separates $0$ from $\fq^*$.
The coefficients are
\[
b_0=1,
\qquad
b_N=-\frac qN,
\qquad
b_u=\frac1N\sum_{t\in\fq^*}\chi_q(t)\widehat t^{-u}\quad(1\le u\le N-1).
\]
The coefficient $b_N$ is obtained from
$N^{-1}\sum_{t\in\fq^*}(\chi_q(t)-1)$ and
$\sum_{t\in\fq^*}\chi_q(t)=-1$. For $1\le u\le N-1$, the classical
Stickelberger congruence for binary Gauss sums
\cite[Theorem~11.2.1]{BEW1998} gives
\[
2^{-s_2(u)}b_u\equiv1\pmod {2\mathcal O}.
\]
Together with $b_0=1$ and $2^{-e}b_N=-1/N\equiv1\pmod {2\mathcal O}$, this says
\begin{equation}\label{eq:bu-first-unit}
2^{-s_2(u)}b_u\equiv1\pmod {2\mathcal O}\qquad(0\le u\le N),
\end{equation}
where $s_2(0)=0$ and $s_2(N)=e$. Thus the possible signs and the usual binary
digit-factorial denominators disappear after reduction modulo $2$.

Using the additivity of $\chi_q$, expand
\[
\chi_q(H(X,Y))=\prod_{i=1}^r\chi_q(a_iX^{D_{i,1}}Y^{D_{i,2}})
\]
by the above interpolation. Summing term by term gives
\[
\sum_{X,Y\in\fq}\chi_q(H(X,Y))
=
\sum_{0\le u_i\le N}
\left(\prod_i b_{u_i}\widehat a_i^{\,u_i}\right)
\left(\sum_{X\in\fq}\widehat X^{M_1(U)}\right)
\left(\sum_{Y\in\fq}\widehat Y^{M_2(U)}\right).
\]
For a nonnegative integer $M$, one has
\[
\sum_{X\in\fq}\widehat X^{M}= 
\begin{cases}
q,& M=0,\\
N,& M>0\text{ and }N\mid M,\\
0,& M>0\text{ and }N\nmid M.
\end{cases}
\]
Consequently a nonzero contribution modulo $2q$ must have
\[
M_1(U)\equiv M_2(U)\equiv0\pmod N.
\]
If one of $M_1(U),M_2(U)$ is zero, then either $U=0$, giving the factor
$q^2$, which is congruent to $0$ modulo $2q$ since $e\ge1$, or $U\ne0$ and the
coefficient already has positive $2$-adic order; in the latter case the extra
factor $q$ makes the term divisible by $2q$. Thus, modulo $2q$, only the terms
with both actual coordinate sums positive can remain.

For such terms the two coordinate sums contribute the odd factor $N^2$, which is
$1$ modulo $2$. By \eqref{eq:bu-first-unit}, a term of weight $w(U)$ is congruent
to
\[
2^{w(U)}\prod_i a_i^{u_i}
\]
up to a factor congruent to $1$ modulo $2$. The assumed lower bound
$w(U)\ge e$ eliminates all weights below $e$, and all weights above $e$ are
zero modulo $2q=2^{e+1}$. The surviving terms are exactly those with $w(U)=e$,
and reducing their unit factors modulo $2$ gives the displayed congruence in
$\mathcal O/(2q)$. Since the left-hand side is a rational integer, applying the Frobenius
automorphism to the congruence cannot change its residue modulo $2q$. Therefore
the coefficient of $q$ that remains after division by $q$ is fixed in the residue
field modulo $2$, hence lies in $\ftwo$. This is the element identified with the
integer $0$ or $1$ in the statement.
\end{proof}

\begin{rem}
Lemma~\ref{lem:binary-hasse-term} may be viewed as the first Hasse term behind
Ax--Katz type estimates. General bounds and density formalisms are developed,
for example, in \cite{Moreno2004} and \cite{Blache2009}. The present proof uses
only the explicit binary first term above and not those general frameworks.
\end{rem}

\begin{lem}\label{lem:first-hasse-term}
With the notation above,
\begin{equation}\label{eq:first-hasse-term}
\mathcal F_{\delta,\Lambda}
\equiv
q\sum_{U\in\mathcal M_e}
\Lambda^{u_3}\delta^{u_4+u_5+u_6}C_\Lambda^{u_6}
\pmod{2q}.
\end{equation}
\end{lem}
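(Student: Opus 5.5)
This is Lemma~\ref{lem:binary-hasse-term} applied to $H=E_{\delta,\Lambda}$, with $X=h$, $Y=y$, $r=6$, degree vectors $D_i=d_i$ from \eqref{degreeofE}, and coefficients $a_1,\dots,a_6$. All $a_i$ are nonzero: $\delta\ne0$, $\Lambda\ne0$, and $C_\Lambda\ne0$ as shown just before \eqref{degreeofE}. The expansion $\chi_q(E_{\delta,\Lambda})=\prod_i\chi_q(a_ih^{d_{i,1}}y^{d_{i,2}})$ is legitimate because $\chi_q$ is additive, and $\sum_{h,y}\chi_q(E_{\delta,\Lambda}(h,y))=\mathcal F_{\delta,\Lambda}$ by \eqref{eq:def-F-delta-Lambda}. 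Once the hypothesis of that lemma is verified, its index set is exactly $\mathcal M_e$: $u_i\in[0,N]$, both coordinate sums positive multiples of $N$ (i.e.\ \eqref{eq:modular-system} with positive actual sums), and $w(U)=e$. The product $\prod a_i^{u_i}$ equals $\Lambda^{u_3}\delta^{u_4}\delta^{u_5}(\delta C_\Lambda)^{u_6}=\Lambda^{u_3}\delta^{u_4+u_5+u_6}C_\Lambda^{u_6}$. This gives \eqref{eq:first-hasse-term}.

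So the real content is the weight bound: every $U$ with $M_1(U)=3u_1+2u_2+u_3+8u_4+u_5$ and $M_2(U)=u_1+u_2+u_3+3u_4+3u_5+3u_6$ both positive multiples of $N$ must satisfy $w(U)\ge e$. I would prove this by a carry/potential argument. Write each $u_i$ in binary with $e$ digits, indexed cyclically mod $e$. Multiplication by $2$ mod $N$ rotates digits without changing $s_2$, so the problem is invariant under the cyclic shift. For each coordinate $j\in\{1,2\}$, consider the column sums $c_k^{(j)}=\sum_i d_{i,j}\,u_{i,k}$ at each digit position $k$. The condition $N\mid M_j$ with $M_j>0$ means the cyclic addition with carries $\gamma_k^{(j)}$ yields all output digits equal to $1$, or equivalently equal to $0$ with a wraparound. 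The carry recursion is $c_k^{(j)}+\gamma_{k-1}^{(j)}=\epsilon_k^{(j)}+2\gamma_k^{(j)}$, with $\epsilon^{(j)}$ either constant $1$ or constant $0$, the latter only if $M_j$ is a larger multiple of $N$.

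Summing over $k$ gives $\sum_k c_k^{(j)}=\sum_k\epsilon_k^{(j)}+\sum_k\gamma_k^{(j)}$. Note that $\sum_k c^{(2)}_k\le 3w(U)$ and $\sum_k c^{(1)}_k\le 8w(U)$, which is too weak on its own. The key step is therefore to find a local potential: rational weights $\alpha,\beta\ge0$ and a per-position inequality
\[
\sum_i u_{i,k}\ \ge\ \alpha(\epsilon^{(1)}_k+2\gamma^{(1)}_k-\gamma^{(1)}_{k-1})+\beta(\epsilon^{(2)}_k+2\gamma^{(2)}_k-\gamma^{(2)}_{k-1})+\phi(\gamma_k)-\phi(\gamma_{k-1}),
\]
valid for all digit patterns and carry states, where $\phi$ is a potential on the finite carry-state set. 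Carries are bounded by $8$ and $3$ respectively. Summing over the cycle telescopes $\phi$ and gives $w(U)\ge(\alpha+\beta)e$ when the $\epsilon$ are all $1$. The all-$0$ outputs, where the carries must be large, need separate handling within the same inequality. Choosing $\alpha+\beta=1$ then yields the bound.

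I expect finding and checking this potential to be the main obstacle. It is a finite min-mean-cycle problem on a graph whose vertices are carry pairs and whose edges are labelled by the digit vectors $(u_{1,k},\dots,u_{6,k})\in\{0,1\}^6$. The goal is to show that every cycle has mean weight at least $1$ per position. I would compute it by Karp's algorithm, or certify it with an explicit potential, in the spirit of the first finite certificate mentioned above; the computation is parameter-free. Equality cycles are exactly where $w(U)=e$, and they describe $\mathcal M_e$. That description is needed only later, not for this lemma.
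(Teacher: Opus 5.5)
Your first paragraph is exactly the paper's proof of this lemma: apply Lemma~\ref{lem:binary-hasse-term} to $E_{\delta,\Lambda}$, read off $\mathcal M_e$ as the surviving index set, and expand $\prod a_i^{u_i}$. The paper simply cites Lemma~\ref{lem:minimal-carry-graph} for the hypothesis $w(U)\ge e$. You instead sketch the weight bound yourself, and that sketch fails as written in two concrete places.

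First, the displayed per-position inequality has no solution with $\alpha,\beta\ge0$ and $\alpha+\beta=1$. By your own recursion, $\epsilon^{(j)}_k+2\gamma^{(j)}_k-\gamma^{(j)}_{k-1}=c^{(j)}_k$ is just the column sum, which is determined by the digit pattern alone, and the potential telescopes. Summing around the cycle would therefore give $w(U)\ge\sum_i(\alpha d_{i,1}+\beta d_{i,2})\,s_2(u_i)$. This is false. Take $U=(0,0,0,N/3,N/3,0)$: it satisfies $M_1=3N$, $M_2=2N$ and $w(U)=e$, while the right side equals $\frac e2(9\alpha+6\beta)\ge 3e$.

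The right-hand side should contain only the output-digit term $\alpha\epsilon^{(1)}_k+\beta\epsilon^{(2)}_k$, that is, the constant $1$. Moreover, the inequality only needs to hold on admissible carry transitions. Concretely, you need a potential $\nu$ on carry pairs with $|m|-1-\nu(\phi)+\nu(\phi')\ge0$ whenever $\phi'=(\phi+d(m))/2$ is an admissible carry. This is \eqref{eq:potential-ineq}, and it is what your min-mean-cycle reformulation in the last paragraph actually says. Keep that formulation and drop the $\alpha,\beta$ ansatz.

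Second, the carry bounds $8$ and $3$ are wrong; they look like the exponents of $h$ and $y$, not bounds coming from the column sums. Use the normalization $d(m_j)=2\phi_{j+1}-\phi_j$; here $\phi$ is your $\gamma+1$ when $\epsilon\equiv1$ and your $\gamma$ when $\epsilon\equiv0$. The maximum argument then gives $1\le\phi_1\le15$ and $1\le\phi_2\le12$, and both bounds are attained by $u_1=\cdots=u_6=N$. More importantly, the weight-$e$ component $\mathcal C_1$ already contains $(7,5)$ and $(8,5)$, so its second carry exceeds $3$ in either of your normalizations. A Karp computation or potential certificate on your truncated state space would miss such cycles and prove nothing; it has to be run over the full rectangle \eqref{eq:carry-rectangle}.

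With these two corrections, your plan becomes the paper's argument for Lemma~\ref{lem:minimal-carry-graph}.
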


\begin{proof}
Apply Lemma~\ref{lem:binary-hasse-term} to $E_{\delta,\Lambda}$. The lower bound
$w(U)\ge e$ required in that lemma is proved below in
Lemma~\ref{lem:minimal-carry-graph}. The remaining weight-$e$ solutions are
precisely the elements of $\mathcal M_e$. Since the coefficients of
$E_{\delta,\Lambda}$ are $a_1,\ldots,a_6$ as displayed above,
\[
\prod_{i=1}^6a_i^{u_i}
=\Lambda^{u_3}\delta^{u_4+u_5+u_6}C_\Lambda^{u_6}.
\]
This proves \eqref{eq:first-hasse-term}.
\end{proof}

We next describe the minimal solutions of \eqref{eq:modular-system}. Write
\[
u_i=\sum_{j=0}^{e-1}u_{i,j}2^j,\qquad u_{i,j}\in\{0,1\},
\]
and set
$m_j=\{i:u_{i,j}=1\}\subseteq\{1,\ldots,6\}$.
Let
\[
d(m_j)=\sum_{i\in m_j}d_i,
\]
where each $d_i$ is given as in \eqref{degreeofE}. A solution of \eqref{eq:modular-system} gives an edge-labelled cyclic carry
path as follows. Indices on the labels are read modulo $e$. Since
$\sum_j2^jd(m_j)$ is divisible by $N$ coordinatewise, so is every cyclic shift
of this sum. Define
\[
\phi_j=\frac1N\sum_{r=0}^{e-1}2^r d(m_{j+r})\qquad(0\le j<e).
\]
The coordinatewise divisibility follows because
\[
\sum_{r=0}^{e-1}2^r d(m_{j+r})\equiv
2^{-j}\sum_{r=0}^{e-1}2^r d(m_r)\pmod N.
\]
Thus each $\phi_j$ is an integral vector. If both actual coordinate sums in
\eqref{eq:modular-system} are positive, then both coordinates of every
$\phi_j$ are positive. Moreover,
\[
2\phi_{j+1}-\phi_j
=\frac1N\left(2\sum_{r=0}^{e-1}2^r d(m_{j+1+r})-
\sum_{r=0}^{e-1}2^r d(m_{j+r})\right)
=d(m_j),
\]
because the middle numerator equals $(2^e-1)d(m_j)=Nd(m_j)$. Hence a solution
with positive actual coordinate sums determines an edge-labelled cyclic carry
path
\[
(\phi_j,m_j)_{j=0}^{e-1},\qquad \phi_e=\phi_0,
\]
with
\begin{equation}\label{eq:carry-equation}
d(m_j)=2\phi_{j+1}-\phi_j.
\end{equation}
Conversely, an edge-labelled cyclic carry path $(\phi_j,m_j)_{j=0}^{e-1},\ \phi_e=\phi_0$
satisfying \eqref{eq:carry-equation} determines a solution uniquely: set \(u_{i,j}=1\) if and only if \(i\in m_j\), and put
\[
u_i=\sum_{j=0}^{e-1}u_{i,j}2^j .
\]
Then \(0\le u_i\le N\). Moreover,
\[
\sum_{i=1}^6u_id_i
=
\sum_{j=0}^{e-1}2^j d(m_j)
=
\sum_{j=0}^{e-1}2^j(2\phi_{j+1}-\phi_j)
=
2^e\phi_e-\phi_0
=
N\phi_0.
\]
Hence
\[
\sum_{i=1}^6u_id_i\equiv (0,0)\pmod N.
\]
Thus the constructed \(U=(u_1,\ldots,u_6)\) is indeed a solution of
\eqref{eq:modular-system}. Its weight is
\[
w(U)=\sum_{i=1}^6s_2(u_i)=\sum_{j=0}^{e-1}|m_j|.
\]
Here the labels \(m_j\) are part of the path data; the vertex sequence
\((\phi_j)\) alone need not determine them uniquely.

\begin{lem}\label{lem:minimal-carry-graph}
Every solution of \eqref{eq:modular-system} whose two actual coordinate sums are positive has $w(U)\ge e$. In the equality case, the directed cycle part of the carry graph consists of exactly the following three components:
\[
\mathcal C_0=\{(3,1)\},\quad\mathcal C_2=\{(3,2),(6,4)\},
\]
and
\[
\begin{aligned}
\mathcal C_1=\{&(1,1),(1,2),(1,3),(2,1),(2,2),(2,3),(3,3),
(4,2),(4,3),(4,4),\\
&(5,2),(5,3),(5,4),(6,3),(7,3),(7,4),(7,5),(8,3),(8,4),(8,5)\}.
\end{aligned}
\]
Moreover, $\mathcal C_0$ contributes $1$, and $\mathcal C_2$ contributes
$\delta^{N/3}+\delta^{2N/3}$.
\end{lem}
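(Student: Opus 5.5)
The plan is to read the statement through the carry-path dictionary set up just before it. Every solution $U$ with positive coordinate sums corresponds to a cyclic path $(\phi_j,m_j)_{j=0}^{e-1}$ with $d(m_j)=2\phi_{j+1}-\phi_j$, and $w(U)=\sum_j|m_j|$. The lower bound $w(U)\ge e$ then follows from a potential argument: we look for a linear functional $L(a,b)=\lambda_1a+\lambda_2b$ with $L(d_i)\ge 1$ for every $i$. Summing over the cycle gives
\[
\sum_j|m_j|\ \ge\ \sum_j L(d(m_j))=\sum_j\bigl(2L(\phi_{j+1})-L(\phi_j)\bigr)=\sum_j L(\phi_j).
\]
If each $\phi_j$ has $L(\phi_j)\ge 1$, we get $w(U)\ge e$.

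Since $\phi_j$ has positive integer coordinates, it suffices to have $\lambda_1,\lambda_2\ge 0$ with $\lambda_1+\lambda_2\ge 1$, together with $L(d_i)\ge 1$ for all $i$. However, $d_6=(0,3)$ forces $\lambda_2\ge 1/3$, and $d_3=(1,1)$ forces $\lambda_1+\lambda_2\ge1$. A single linear functional therefore will not be tight. I would instead use a piecewise or vertex-dependent potential $\psi(\phi)$ with
\[
|m|\ \ge\ \psi(\phi')\cdot 2-\psi(\phi)+\bigl(1-\text{slack}\bigr)\quad\text{along each edge }\phi\to\phi'.
\]
Concretely, the inequality to verify is $|m|-1\ge g(\phi')-g(\phi)$ for a suitable integer function $g$ on the finite set of reachable states. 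The states are bounded: iterating $\phi_j=2\phi_{j+1}-d(m_j)$ backwards with $\phi$ periodic shows $\phi_j\le \max d(m)\le \sum_i d_i=(15,10)$. So only finitely many vertices and edges occur, independent of $e$, and summing the inequality around any cycle gives $w(U)-e\ge0$. I would find $g$ by a shortest-path (Bellman–Ford) computation on the finite graph with edge weights $|m|-1$. The absence of negative cycles is exactly the inequality, and this is the parameter-free certificate mentioned in the appendix.

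For the equality case, equality forces every edge on the cycle to be tight, $|m|-1=g(\phi')-g(\phi)$. So the weight-$e$ solutions are exactly closed walks of length $e$ in the subgraph of tight edges. Its strongly connected components are computed from the same finite data, and they should be exactly $\mathcal C_0,\mathcal C_1,\mathcal C_2$ as listed. For instance, at $(3,1)$ the label $m=\{1\}$ gives $2(3,1)-(3,1)=(3,1)$, a loop of weight $1$.

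For the contributions, $\mathcal C_0$ admits only the loop $m_j=\{1\}$ for all $j$. That is $u_1=N$ with the others zero, which gives the monomial $a_1^N=1$. For $\mathcal C_2$, I would check the tight edges between $(3,2)$ and $(6,4)$:
\[
2(6,4)-(3,2)=(9,6)=d_4+d_1? \ (11,4)\ \text{no};\qquad d_4+d_5=(9,6).
\]
So the edge $(3,2)\to(6,4)$ has label $\{4,5\}$ and weight $2$. Also $2(3,2)-(6,4)=(0,0)$, the empty label with weight $0$. The component is therefore the 2-cycle alternating these labels, and it requires $e$ even, which holds. The two phases give $u_4=u_5=\sum_{j\ \text{odd}}2^j$ or $\sum_{j\ \text{even}}2^j$, that is, $2N/3$ or $N/3$. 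The monomial $\delta^{u_4+u_5}$ is then $\delta^{4N/3}=\delta^{N/3}$ or $\delta^{2N/3}$. Summing gives $\delta^{N/3}+\delta^{2N/3}$.

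The main obstacle is the first step: producing the potential $g$ and proving that the tight subgraph has exactly these strongly connected components, with $\mathcal C_1$ of size $20$. This is a finite but nontrivial enumeration over all labels $m\subseteq\{1,\dots,6\}$ and bounded states, and I would delegate it to the recorded finite certificate.
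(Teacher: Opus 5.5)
Your proposal is correct and follows the paper's proof. The shared steps are: bound the carries to a finite rectangle; certify $|m|-1\ge g(\phi')-g(\phi)$ with a potential found by a shortest-path search and verified by the finite certificate (the paper's $\nu$ is $-g$); take the cyclic strongly connected components of the tight graph; and compute the $\mathcal C_0$ loop and the two phases of the $\mathcal C_2$ two-cycle exactly as you do. One correction is needed: the second coordinates sum to $1+1+1+3+3+3=12$, so $\sum_i d_i=(15,12)$, not $(15,10)$, and the potential must be checked on $[1,15]\times[1,12]$ (for instance, $(15,12)$ is a fixed carry under the label $\{1,\dots,6\}$).
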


\begin{proof}
Let $d(m)=(d_1(m),d_2(m))\in\mathbb{Z}^2$. Since
\[
0\le d_1(m)\le 3+2+1+8+1=15,
\qquad
0\le d_2(m)\le 1+1+1+3+3+3=12,
\]
every cyclic carry $\phi_j=(\phi_{j,1},\phi_{j,2})$ satisfying \eqref{eq:carry-equation} lies in the rectangle
\begin{equation}\label{eq:carry-rectangle}
1\le \phi_1\le 15,
\qquad
1\le \phi_2\le 12.
\end{equation}
Indeed, multiplying \eqref{eq:carry-equation} by $2^j$ and summing over
$j=0,\ldots,e-1$ gives
\[
\sum_{j=0}^{e-1}2^jd(m_j)=2^e\phi_e-\phi_0=N\phi_0.
\]
The left-hand side is the vector of actual coordinate sums. Hence the two
coordinates of $\phi_0$ are positive. Since all digits $d_i(m_j)$ are
nonnegative, no later carry coordinate can be zero: if
$\phi_{j+1,r}=0$, then $d_r(m_j)=2\phi_{j+1,r}-\phi_{j,r}$ forces
$\phi_{j,r}=d_r(m_j)=0$, and propagating this implication backwards around the
cycle gives $\phi_{0,r}=0$, a contradiction. If $M_r$ is the maximum of the
$r$-th carry coordinate along the cycle, then at a place where this maximum is
attained,
\[
M_r\le \frac{M_r+D_r}{2},
\qquad D_1=15,\quad D_2=12,
\]
which gives $M_r\le D_r$.

On the rectangle \eqref{eq:carry-rectangle} use the potential function $\nu$ given by the following table; rows are indexed by the second coordinate and columns by the first coordinate:
\[
\begin{array}{c|ccccccccccccccc}
y\backslash x
&1&2&3&4&5&6&7&8&9&10&11&12&13&14&15\\ \hline
1&3&3&2&1&1&1&1&0&0&0&0&0&0&0&0\\
2&3&2&2&2&3&1&1&0&0&0&0&0&0&0&0\\
3&2&2&2&2&2&2&2&2&0&0&0&0&0&0&0\\
4&2&1&2&1&2&1&2&1&1&0&0&0&0&0&0\\
5&1&1&1&1&1&1&1&1&0&0&0&0&0&0&0\\
6&1&1&1&1&1&1&1&0&0&0&0&0&0&0&0\\
7&0&0&0&0&0&0&0&0&0&0&0&0&0&0&0\\
8&0&0&0&0&0&0&0&0&0&0&0&0&0&0&0\\
9&0&0&0&0&0&0&0&0&0&0&0&0&0&0&0\\
10&0&0&0&0&0&0&0&0&0&0&0&0&0&0&0\\
11&0&0&0&0&0&0&0&0&0&0&0&0&0&0&0\\
12&0&0&0&0&0&0&0&0&0&0&0&0&0&0&0
\end{array}
\]
The only assertion about this table is the following finite statement. The
potential has no intrinsic role beyond certifying this inequality; no optimality
or uniqueness of $\nu$ is required. For each of the $64$ subsets
$m\subseteq\{1,\ldots,6\}$ and each carry $\phi$ in \eqref{eq:carry-rectangle} for which
\[
\phi'=\frac{\phi+d(m)}2
\]
has integral coordinates and also lies in \eqref{eq:carry-rectangle}, one has
\begin{equation}\label{eq:potential-ineq}
|m|-1-\nu(\phi)+\nu(\phi')\ge0.
\end{equation}
The table was found by a finite shortest-potential search, but only the
following certified statement is used. It is a finite check over
$64\cdot 15\cdot12$ cases; Appendix~\ref{app:finite-checks} gives a complete
script which verifies \eqref{eq:potential-ineq} and lists exactly the directed
cycle components of the equality graph. Summing \eqref{eq:potential-ineq} around a closed carry path gives
\[
w(U)-e\ge0.
\]
Equality can occur only when every edge of the closed path is an equality edge in \eqref{eq:potential-ineq}. The directed cycles in this equality graph are precisely the three components $\mathcal C_0,\mathcal C_1,\mathcal C_2$ displayed in the statement. All other equality vertices, if any, are transient and therefore cannot occur in a closed minimal carry path.

The component $\mathcal C_0$ has the single loop
\[
(3,1)\xrightarrow{\{1\}}(3,1),
\]
which gives $u_1=N$ and all other $u_i=0$; hence its contribution is $1$.
The component $\mathcal C_2$ is the two-cycle
\[
(3,2)\xrightarrow{\{4,5\}}(6,4),
\qquad
(6,4)\xrightarrow{\varnothing}(3,2).
\]
Since $e$ is even, the two possible cyclic phases contribute respectively
\[
\delta^{2(1+2^2+\cdots+2^{e-2})}=\delta^{2N/3}
\]
and
\[
\delta^{2(2+2^3+\cdots+2^{e-1})}=\delta^{4N/3}=\delta^{N/3},
\]
because $\delta^N=1$ in $\fq^*$ and $4N/3\equiv N/3\pmod N$.
This proves the lemma.
\end{proof}

It remains to show that the large component $\mathcal C_1$ contributes zero.
The idea is to encode its closed walks by transfer matrices and then to use the
Artin--Schreier boundary relation below to exchange two conjugate moving
projective lines after one full period. Thus a total return matrix becomes
anti-diagonal in a suitable basis, and its trace, which is exactly the closed-walk
contribution, is zero. The next three lemmas make this cancellation explicit.
Let $\eta\in\fqq$ satisfy
\[
\eta^2+\eta=\Lambda.
\]
Since $\Tr_q(\Lambda)=1$, one has
\begin{equation}\label{eq:eta-boundary}
\eta^q+\eta=1.
\end{equation}
Put $\eta_j=\eta^{2^j}$, and set
\[
L_j=\Lambda^{2^j}=\eta_{j+1}+\eta_j,
\qquad
C_j=C_\Lambda^{2^j}=\eta_{j+3}+\eta_j,
\qquad
D_j=\delta^{2^j}.
\]
The following partition is induced by the equality graph together with the
weighted blocks verified in Appendix~\ref{app:finite-checks}. The six two-state
groups are exactly the parts on which the moving two-line frame below is
transported directly. The remaining eight-state group is the only region through
which paths have to be collapsed into excursion kernels $K_{a,b}$.
We split $\mathcal C_1$ into seven groups:
\[
G_1=\{(1,1),(2,1)\},\quad
G_2=\{(1,2),(5,2)\},\quad
G_3=\{(2,2),(4,2)\},
\]
\[
G_4=\{(4,4),(8,4)\},\quad
G_5=\{(5,4),(7,4)\},\quad
G_6=\{(7,5),(8,5)\},
\]
\[
G_7=\{(1,3),(2,3),(3,3),(4,3),(5,3),(6,3),(7,3),(8,3)\}.
\]
The six two-state groups have shifts
\[
s_1=0,
\quad s_2=2,
\quad s_3=1,
\quad s_4=2,
\quad s_5=1,
\quad s_6=0.
\]
For $1\le i\le 6$ define two moving projective vectors
\begin{equation}
p_j^{(i)}=\binom{\eta_{j+s_i}^{-1}}1,
\qquad
\bar p_j^{(i)}=\binom{(\eta_{j+s_i}+1)^{-1}}1.
\end{equation}
The denominators are nonzero because \eqref{eq:eta-boundary} implies $\eta_j\notin\{0,1\}$ for every $j$.
Throughout this part, $\langle v_1,\ldots,v_r\rangle$ denotes the
$\fqq$-linear span of the displayed vectors. In particular, for a nonzero
vector $v$, the notation $\langle v\rangle$ denotes the one-dimensional
subspace spanned by $v$, or equivalently the corresponding projective line.
All transition matrices in the large component are written with rows indexed by
the source group and columns indexed by the target group. Thus, for the purpose
of closed-walk weights, products are taken from left to right; when these
matrices act on column vectors, they are pullback maps from the target coordinate
space to the source coordinate space. For example, the edge
$(1,1)\xrightarrow{\{3\}}(1,1)$ contributes the entry $L_j$ in row $(1,1)$ and
column $(1,1)$ of $A_j$, while
$(1,1)\xrightarrow{\{1\}}(2,1)$ contributes the entry $1$ in row $(1,1)$ and
column $(2,1)$. The displayed matrices below give all equality-graph transitions
in the large component; Appendix~\ref{app:finite-checks} verifies the edge list
and the weights directly. Some blocks contain scalar factors such as $D_j$ or
$D_j^2$. These factors are kept in the transfer-matrix weights, but they do not
affect the projective-line containments used below; multiplying a block by a
nonzero scalar preserves exactly the same projective lines.

\begin{lem}\label{lem:two-state-frame}
Every two-state transition in $\mathcal C_1$ maps the two moving lines into the corresponding two moving lines. More precisely, if $M_j:G_i\to G_k$ is one of the two-state blocks below, viewed as a pullback matrix from the target columns to the source rows, then
\[
M_jp_{j+1}^{(k)}\in\langle p_j^{(i)}\rangle,
\qquad
M_j\bar p_{j+1}^{(k)}\in\langle \bar p_j^{(i)}\rangle.
\]
\end{lem}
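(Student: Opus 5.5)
The lemma is a finite list of matrix identities, so I would first reconstruct every two-state block from the equality graph and then check the two line containments by direct computation.

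\textbf{Building the blocks.} For each ordered pair of two-state groups $G_i\to G_k$ joined by an edge in $\mathcal C_1$, I would read off the $2\times 2$ matrix $M_j$ from \eqref{eq:carry-equation}. The $(\phi,\phi')$ entry is the sum, over labels $m$ with $d(m)=2\phi'-\phi$ that are equality edges of \eqref{eq:potential-ineq}, of the weights
\[
\prod_{i\in m}a_i^{2^j}.
\]
These weights are monomials in $L_j$, $D_j$ and $C_j$. Entries that involve only $D_j$, or an overall factor $D_j^2$, are a common scalar on the whole block and can be discarded. The reason is that $\langle\cdot\rangle$ is unchanged under nonzero scalar multiples. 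The remaining entries are polynomials in $L_j=\eta_{j+1}+\eta_j$ and $C_j=\eta_{j+3}+\eta_j$, with $\eta_{j+r}=\eta_j^{2^r}$.

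\textbf{Reducing to a determinant.} Write $M_j=\begin{pmatrix}\alpha&\beta\\ \gamma&\varepsilon\end{pmatrix}$. The containment $M_jp^{(k)}_{j+1}\in\langle p^{(i)}_j\rangle$ says that the vector
\[
\bigl(\alpha x+\beta,\ \gamma x+\varepsilon\bigr)^{T},\qquad x=\eta_{j+1+s_k}^{-1},
\]
is proportional to $(\eta_{j+s_i}^{-1},1)^{T}$. Equivalently, with $y=\eta_{j+s_i}$,
\[
y(\alpha x+\beta)=\gamma x+\varepsilon .
\]
After clearing denominators, this is a polynomial identity in the single variable $\eta_j$. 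It should hold identically using only $\eta_{r+1}=\eta_r^2$, since the blocks were built so that it does. The barred version is handled by symmetry rather than separate work. Replacing $\eta$ by $\eta+1$ leaves $\Lambda=\eta^2+\eta$ fixed, hence leaves $L_j$, $C_j$, $M_j$ fixed. It sends $\eta_r$ to $\eta_r+1$ for all $r$, because squaring fixes $1$. So the barred containment is the image of the unbarred one under this substitution and follows automatically.

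\textbf{Checking the cases.} I would go through each block $G_i\to G_k$ and use the shifts $s_i$ to see why the relation closes. For example, a self-loop of $G_1$ with label $\{3\}$ carries weight $L_j$. The identity then reduces to the statement that $\eta_{j+1}+\eta_j=L_j$, which is the kind of telescoping fact that fixes $s_1=0$. Blocks involving $C_j$ should close with shift differences of $2$ or $3$, matching $C_j=\eta_{j+3}+\eta_j$. Degenerate blocks, where a row or column vanishes, need care: then $M_jp$ may be zero, and zero counts as lying in any line.

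\textbf{Main obstacle.} The hard part is not the algebra but getting the block entries right: which labels $m$ are genuine equality edges in each group, and in what row/column orientation. The pullback convention reverses source and target, and an error there breaks the identities. I would therefore first rederive the edge list by hand from \eqref{eq:carry-equation} and \eqref{eq:potential-ineq}, restricted to the twelve states of $G_1,\dots,G_6$. I would then cross-check it against the certificate in Appendix~\ref{app:finite-checks} before verifying the rational identities.
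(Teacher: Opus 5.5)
Your proposal is correct and follows the paper's proof: the paper likewise takes the blocks $A_j$, $B_j$, $E_j$ and the scalar or identity blocks (their edge list is certified in the appendix rather than rederived by hand), discards the nonzero factors $D_j$ and $D_j^2$, checks each containment by direct substitution of $L_j=\eta_{j+1}+\eta_j$, $C_j=\eta_{j+3}+\eta_j$ and $\eta_{r+1}=\eta_r^2$, and gets the barred case from $\eta_r\mapsto\eta_r+1$, exactly as your condition $y(\alpha x+\beta)=\gamma x+\varepsilon$ and your symmetry argument do. The only point you leave implicit is that the pure scalar blocks ($L_jI_2$, $D_j^2I_2$, $I_2$) close precisely because their shifts satisfy $s_i=s_k+1$, which you would see at once when going through the cases.
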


\begin{proof}
The required two-state blocks are
\[
G_1\to G_1:
\quad
A_j=\begin{pmatrix}L_j&1\\0&1\end{pmatrix},
\]
\[
G_1\to G_2,\\ G_6\to G_4:
\quad
B_j=D_j\begin{pmatrix}1&0\\ C_j&1\end{pmatrix},
\]
\[
G_2\to G_3:
\quad L_jI_2,
\qquad
G_2\to G_5,\ G_5\to G_6:
\quad D_j^2I_2,
\]
\[
G_3\to G_1,\ G_4\to G_3:
\quad I_2,
\]
\[
G_5\to G_4:
\quad
E_j=D_j\begin{pmatrix}1+C_j&1\\ L_jC_j&L_j\end{pmatrix}.
\]
Scalar factors do not affect projective lines, and the factors $D_j,D_j^2$
are nonzero because $\delta\in\fq^*$. Thus the projective checks may be made
after removing those scalars. The identities
\[
L_j=\eta_{j+1}+\eta_j,
\qquad
C_j=\eta_{j+3}+\eta_j
\]
are enough for all checks.
For example,
\[
A_j\binom{\eta_{j+1}^{-1}}1
=\binom{L_j\eta_{j+1}^{-1}+1}1
=\binom{\eta_j^{-1}}1,
\]
and similarly with $\eta_r$ replaced by $\eta_r+1$.
For $B_j$, ignoring $D_j$,
\[
\begin{pmatrix}1&0\\C_j&1\end{pmatrix}
\binom{\eta_{j+3}^{-1}}1
=\binom{\eta_{j+3}^{-1}}{(C_j+\eta_{j+3})\eta_{j+3}^{-1}}
=\eta_j\eta_{j+3}^{-1}\binom{\eta_j^{-1}}1.
\]
Again the barred case follows from $C_j+\eta_{j+3}+1=\eta_j+1$.
For $E_j$, ignoring $D_j$ and applying it to $\binom{\eta_{j+3}^{-1}}1$, the ratio of the two coordinates is
\[
\frac{(1+C_j)\eta_{j+3}^{-1}+1}
{L_jC_j\eta_{j+3}^{-1}+L_j}
=\eta_{j+1}^{-1},
\]
because $C_j+\eta_{j+3}=\eta_j$, $L_j=\eta_{j+1}+\eta_j$, and $\eta_{j+1}=\eta_j^2$.
The denominator in this ratio is nonzero; indeed it equals
$(\eta_j+1)\eta_j^{-6}$ after rewriting everything in terms of $\eta_j$,
and $\eta_j\notin\{0,1\}$. The barred case is identical after replacing
every $\eta_r$ by $\eta_r+1$. The remaining blocks are scalar multiples of
the identity or identities, and the chosen shifts make the source and target
frames agree. This proves the lemma.
\end{proof}

We now handle paths through the eight-state group $G_7$. Order the states of $G_7$ as
\[
(1,3),(2,3),(3,3),(4,3),(5,3),(6,3),(7,3),(8,3).
\]
The internal transition is $D_jR_j$, where $R_j=R(C_j)=P+C_jQ$ and
\[
P=\begin{pmatrix}
1&0&0&0&0&0&0&0\\
0&0&0&0&1&0&0&0\\
0&1&0&0&0&0&0&0\\
0&0&0&0&0&1&0&0\\
0&0&1&0&0&0&0&0\\
0&0&0&0&0&0&1&0\\
0&0&0&1&0&0&0&0\\
0&0&0&0&0&0&0&1
\end{pmatrix},
\quad
Q=\begin{pmatrix}
0&0&0&0&0&0&0&0\\
1&0&0&0&0&0&0&0\\
0&0&0&0&0&0&0&0\\
0&1&0&0&0&0&0&0\\
0&0&0&0&0&0&0&0\\
0&0&1&0&0&0&0&0\\
0&0&0&0&0&0&0&0\\
0&0&0&1&0&0&0&0
\end{pmatrix}.
\]
The transition from $G_2$ to $G_7$ has an overall scalar factor $D_j$; after
removing this nonzero scalar it is
\[
S_j=\begin{pmatrix}
L_jC_j&1+C_j&0&0&L_j&1&0&0\\
0&0&L_jC_j&1+C_j&0&0&L_j&1
\end{pmatrix},
\]
and the transition from $G_7$ to $G_3$ is
\[
T_j=\begin{pmatrix}
1&0\\
1&0\\
L_j&0\\
0&0\\
0&1\\
0&1\\
0&L_j\\
0&0
\end{pmatrix}.
\]
A maximal excursion through $G_7$ therefore gives, after deleting the nonzero
overall scalar $D_aD_{a+1}\cdots D_{b-1}$, an effective two-state kernel
\begin{equation}\label{eq:G7-kernel}
K_{a,b}=S_aR_{a+1}R_{a+2}\cdots R_{b-1}T_b
\qquad(a<b).
\end{equation}
The omitted scalar is still present in the closed-walk weight, but it is
irrelevant for the projective containment asserted in Lemma~\ref{lem:G7-kernel-frame}.

\begin{lem}\label{lem:G7-kernel-frame}
For every $a<b$,
\[
K_{a,b}\binom{\eta_{b+2}^{-1}}1
\in
\left\langle\binom{\eta_{a+2}^{-1}}1\right\rangle,
\]
and
\[
K_{a,b}\binom{(\eta_{b+2}+1)^{-1}}1
\in
\left\langle\binom{(\eta_{a+2}+1)^{-1}}1\right\rangle.
\]
Equivalently,
\[
K_{a,b}p_{b+1}^{(3)}\in\langle p_a^{(2)}\rangle,
\qquad
K_{a,b}\bar p_{b+1}^{(3)}\in\langle\bar p_a^{(2)}\rangle.
\]
\end{lem}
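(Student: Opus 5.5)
Since $K_{a,b}=S_aR_{a+1}\cdots R_{b-1}T_b$ is a product acting on column vectors from the right end, I will argue by induction on $b-a$, tracking an eight-dimensional moving vector through the $G_7$ chain. The natural plan is to find, for each $j$, explicit vectors $v_j,\bar v_j\in\fqq^8$ with three properties:
\[
T_j\binom{\eta_{j+2}^{-1}}1\in\langle v_j\rangle,\qquad R_jv_{j+1}\in\langle v_j\rangle,\qquad S_jv_{j+1}\in\left\langle\binom{\eta_{j+2}^{-1}}1\right\rangle,
\]
together with the barred analogues obtained by $\eta_r\mapsto\eta_r+1$. Given these, the product telescopes: $T_b$ sends $p^{(3)}_{b+1}$ into $\langle v_b\rangle$, each $R_j$ carries $\langle v_{j+1}\rangle$ into $\langle v_j\rangle$, and $S_a$ carries $\langle v_{a+1}\rangle$ into $\langle p^{(2)}_a\rangle$. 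One caveat is that the third condition is used with $S_a$ acting on $v_{a+1}$. So I should really require $T_b$ to land in the line fed to $R_{b-1}$, that is, index $v$ so that $T_b p_{b+1}^{(3)}\in\langle v_b\rangle$ and $R_jv_{j+1}\in\langle v_j\rangle$. Then $S_a v_{a+1}\in\langle p_a^{(2)}\rangle$ handles all $b>a$, including the case $b=a+1$, where $K_{a,a+1}=S_aT_{a+1}$.

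To guess $v_j$, I compute $T_j\binom{x}{1}$ with $x=\eta_{j+2}^{-1}$:
\[
T_j\binom{x}{1}=(x,\,x,\,L_jx,\,0,\,1,\,1,\,L_j,\,0)^{T}.
\]
Since $R_j=P+C_jQ$ is a permutation plus a lower-triangular perturbation, I would take an ansatz $v_j=(x_j,\,y_j,\,\ldots)$ whose entries are rational functions in $\eta_j,\ldots,\eta_{j+3}$, and choose it so that it reduces to the $T$-image, suitably normalized. The invariance $R_jv_{j+1}\propto v_j$ then becomes a short system of polynomial identities in $\eta_j$. These hold by the same three relations used in Lemma~\ref{lem:two-state-frame}: $L_j=\eta_{j+1}+\eta_j$, $C_j=\eta_{j+3}+\eta_j$ and $\eta_{j+1}=\eta_j^2$. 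Since $R_j$ pairs coordinates $(1,2),(3,4),(5,6),(7,8)$ with $P$ permuting them, I expect $v_j$ to split as two ``half'' vectors supported on the odd and even coordinates, mirroring the two columns of $T_j$ and the two rows of $S_j$.

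The barred statement follows from the same identities after $\eta_r\mapsto\eta_r+1$. The substitution leaves $L_j$ and $C_j$ unchanged (characteristic $2$) and is compatible with squaring up to the relation $(\eta+1)^2=\eta^2+1$, so every verified polynomial identity transfers verbatim. This is the same mechanism as in Lemma~\ref{lem:two-state-frame}.

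The main obstacle is producing the correct invariant vectors $v_j$ and confirming that no entry ratio has a vanishing denominator. The lines must be genuinely defined, so each nonzero scalar that appears must be expressible as a product of factors $\eta_r$ and $\eta_r+1$, which are nonzero because $\eta_j\notin\{0,1\}$. I would first try to solve $R_jv_{j+1}=\lambda_jv_j$ backwards from the $T$-image for small $b-a$ by hand, then read off the pattern. If the direct ansatz proves messy, the fallback is the approach signalled in the paper: reduce to finitely many rational identities in $\eta_j$ and verify them symbolically, as in the SageMath check of Appendix~\ref{app:finite-checks}. Once invariance is established, the induction is routine.
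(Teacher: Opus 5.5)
Your telescoping scheme has a genuine gap: the one-dimensional invariant lines it needs do not exist. Put $u_j=T_j\binom{\eta_{j+2}^{-1}}1=(x,x,L_jx,0,1,1,L_j,0)^{T}$ with $x=\eta_{j+2}^{-1}$. Your first condition forces $v_j$ to be a nonzero multiple of $u_j$, and this must hold at every $j$ because $b$ ranges over all indices. Your second condition would then say $R_ju_{j+1}\in\langle u_j\rangle$. But for $R_j=P+C_jQ$ one has $(R_jv)_7=v_4$, so the seventh coordinate of $R_ju_{j+1}$ is $(u_{j+1})_4=0$. On the other hand, $(u_j)_7=L_j=\eta_j(\eta_j+1)\neq0$. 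Since $R_ju_{j+1}\neq0$, it is not a multiple of $u_j$. Explicitly, with $\eta=\eta_j$,
\[
R_ju_{j+1}=\bigl(\eta^{-8},\eta^{-7},\eta^{-8},\eta^{-7},\eta^{-4}+\eta^{-6},\eta^{-3}+\eta^{-5},0,0\bigr)^{T}.
\]
So your ansatz only handles $b=a+1$, where indeed $S_au_{a+1}\in\langle p_a^{(2)}\rangle$. Already for $b=a+2$ the vector $R_{a+1}u_{a+2}$ leaves the line, and the ``routine induction'' has no invariant to run on. The symbolic fallback does not repair this. Without a finite invariant structure, no finite list of rational identities controls all lengths $b-a$.

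The missing idea is to propagate a subspace rather than a line, and to close with an annihilator. The paper takes the three-dimensional space $V_j=\langle u_j,R_ju_{j+1},R_jR_{j+1}u_{j+2}\rangle$ and proves $R_jV_{j+1}\subseteq V_j$. The only nontrivial point is that $R_jR_{j+1}R_{j+2}u_{j+3}$ is an explicit combination of the three spanning vectors, with coefficients in $\ftwo[\eta_j,\eta_j^{-1}]$. Hence $R_{a+1}\cdots R_{b-1}u_b\in V_{a+1}$ for every $b>a$. The paper then checks that the row vector $(1,\eta_{a+2}^{-1})S_a$ kills the three spanning vectors of $V_{a+1}$. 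This gives $(1,\eta_{a+2}^{-1})K_{a,b}\binom{\eta_{b+2}^{-1}}1=0$, which is the claimed containment, because in characteristic $2$ the kernel of $(1,x)$ is spanned by $\binom{x}{1}$. In your terms, your third condition is the right one, but it must hold on all of $V_{a+1}$: $S_a$ must collapse this three-dimensional space onto the line $\langle p_a^{(2)}\rangle$. Your handling of the barred case via $\eta_r\mapsto\eta_r+1$ is fine.
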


\begin{proof}
We prove the first assertion for the scalar-free kernel \eqref{eq:G7-kernel};
restoring the omitted nonzero scalar only multiplies the final vector and hence
does not change the projective line. The barred assertion is obtained by replacing every
$\eta_r$ by $\eta_r+1$. This replacement leaves $L_j$ and $C_j$ unchanged and is
compatible with Frobenius, since $(\eta_r+1)^2=\eta_{r+1}+1$. The identities are
formal Frobenius translates and are valid for arbitrary integers in the displayed
ranges. In the symbolic check one
first clears denominators; the only possible denominators are powers of $\eta_j$
and $\eta_j+1$, both nonzero by \eqref{eq:eta-boundary}. Appendix~\ref{app:finite-checks}
gives a reproducible SageMath check over $\ftwo(t)$ for the required rational
identities.
Put $x_r=\eta_r^{-1}$ and
\[
u_j=T_j\binom{x_{j+2}}1
=\begin{pmatrix}
x_{j+2}\\x_{j+2}\\L_jx_{j+2}\\0\\1\\1\\L_j\\0
\end{pmatrix}.
\]
Let
\[
V_j=\langle u_j,
R_ju_{j+1},
R_jR_{j+1}u_{j+2}\rangle.
\]
A direct multiplication, using only $\eta_{r+1}=\eta_r^2$, gives
\begin{equation}\label{eq:G7-invariance}
R_jV_{j+1}\subseteq V_j.
\end{equation}
For hand verification, the only non-immediate vector is $R_jR_{j+1}R_{j+2}u_{j+3}$; after writing all entries in terms of $\eta_j$, it equals
\[
\frac{\eta_j^6+\eta_j^5+\eta_j^4+\eta_j^3+\eta_j^2+\eta_j+1}{\eta_j^{28}}u_j
+\frac{\eta_j^5+\eta_j^4+\eta_j+1}{\eta_j^{23}}R_ju_{j+1}
+\frac{\eta_j+1}{\eta_j^{13}}R_jR_{j+1}u_{j+2}.
\]
Now set
\[
\alpha_a=(1,\eta_{a+2}^{-1})S_a.
\]
Another direct substitution gives
\[
\alpha_a u_{a+1}=0,
\qquad
\alpha_aR_{a+1}u_{a+2}=0,
\qquad
\alpha_aR_{a+1}R_{a+2}u_{a+3}=0.
\]
Thus $\alpha_aV_{a+1}=0$. By \eqref{eq:G7-invariance},
\[
R_{a+1}R_{a+2}\cdots R_{b-1}u_b\in V_{a+1}.
\]
Therefore
\[
(1,\eta_{a+2}^{-1})K_{a,b}\binom{\eta_{b+2}^{-1}}1=0,
\]
which is equivalent to the claimed projective containment.
\end{proof}

\begin{lem}\label{lem:G7-trace-zero}
For arbitrary $C_0,\ldots,C_{r-1}$ in any field of characteristic $2$,
\[
\operatorname{tr}\bigl(R(C_0)R(C_1)\cdots R(C_{r-1})\bigr)=0.
\]
\end{lem}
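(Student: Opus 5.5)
The plan is to read $R(C)=P+CQ$ as the weighted adjacency matrix of a directed graph on the eight ordered states of $G_7$, labelled $1,\dots,8$. With rows as sources and columns as targets, the trace of $R(C_0)\cdots R(C_{r-1})$ is the sum, over closed walks $v_0\to v_1\to\cdots\to v_r=v_0$, of the products of the edge weights, where the $k$-th step uses an entry of $R(C_k)$. So it suffices to list the closed walks and their weights.

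The first step is to read off the edges. The permutation matrix $P$ gives the weight-$1$ edges
\[
1\to1,\quad 8\to8,\quad 2\to5\to3\to2,\quad 4\to6\to7\to4 .
\]
The matrix $Q$ gives the weight-$C_k$ edges
\[
2\to1,\quad 4\to2,\quad 6\to3,\quad 8\to4 .
\]

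The second step is to find the strongly connected components. State $1$ has no outgoing edge except its loop, and state $8$ has no incoming edge except its loop. The $Q$-edges only go from $\{4,6,7\}$ to $\{2,3,5\}$, from $\{2,3,5\}$ to $\{1\}$, and from $\{8\}$ to $\{4,6,7\}$. So the components are $\{1\}$, $\{8\}$, $\{2,3,5\}$ and $\{4,6,7\}$, and they are ordered acyclically:
\[
\{8\}\to\{4,6,7\}\to\{2,3,5\}\to\{1\}.
\]
Every closed walk therefore stays inside a single component, so it never uses a $Q$-edge. This is the key point: the variables $C_k$ drop out entirely.

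The third step is to count closed walks of length $r$. The loops at $1$ and at $8$ each contribute one walk of weight $1$. In each of the two $3$-cycles, a closed walk of length $r$ exists only when $3\mid r$, and then there are exactly three of them, one for each starting vertex, each of weight $1$. Hence
\[
\operatorname{tr}\bigl(R(C_0)\cdots R(C_{r-1})\bigr)=2+6\cdot[\,3\mid r\,],
\]
which is $0$ in characteristic $2$.

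There is no real obstacle here. The only care needed is to read the row/column convention for $P$ and $Q$ correctly, and to check that no $Q$-edge points back into a higher component; an upper-triangular reordering of the states makes this visible at a glance.
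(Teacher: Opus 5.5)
Your proof is correct, and it takes a different route from the paper's.

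The paper argues by a finite certificate. It lists sixteen $0$-$1$ matrices $I_8,P,Q,P^2,PQ,\ldots,P^2Q^2$. A computer check in the appendix confirms that their span is closed under right multiplication by $P$ and $Q$, and that each listed matrix has even trace. It follows that every product of the $R(C_i)=P+C_iQ$ lies in a trace-zero subspace.

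You instead read $R(C)$ as a weighted digraph. $P$ and $Q$ have disjoint supports, and every $Q$-edge ($2\to1$, $4\to2$, $6\to3$, $8\to4$) joins two distinct strongly connected components of the acyclic chain $\{8\}\to\{4,6,7\}\to\{2,3,5\}\to\{1\}$. Equivalently, if you order the states as $8,4,6,7,2,3,5,1$, each $R(C)$ is block upper triangular with $C$-independent permutation blocks on the diagonal. This gives the exact value $\operatorname{tr}=2+6\cdot[\,3\mid r\,]$ as an element of $\mathbb{Z}[C_0,\ldots,C_{r-1}]$, which is even.

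What your route buys:
\begin{itemize}
\item It is checkable by hand, so it removes the need for the appendix's trace-zero-algebra script.
\item It explains why the $C_k$ are invisible to the trace: closed walks inside $G_7$ never use a $Q$-edge.
\item It holds over any commutative ring of characteristic $2$.
\end{itemize}
The paper's span-closure certificate matches the style of its other finite checks, but it gives no structural reason for the vanishing.
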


\begin{proof}
Let $K$ be the ground field. The $K$-linear span of the following sixteen
$0$-$1$ matrices is closed under right multiplication by $P$ and by $Q$:
\[
I_8,
P,
Q,
P^2,
PQ,
QP,
Q^2,
P^2Q,
PQP,
PQ^2,
QP^2,
QPQ,
Q^2P,
Q^3,
P^2QP,
P^2Q^2.
\]
This closure is one of the finite matrix checks recorded in
Appendix~\ref{app:finite-checks}. Each listed matrix has trace zero as an
element of a characteristic-two field. In particular, $\operatorname{tr}(I_8)=8=0$
in characteristic $2$. Since each $R(C)=P+CQ$ belongs to this span and the span
is stable under right multiplication by $P$ and by $Q$, every product of the
$R(C_i)$ belongs to the same trace-zero linear space, and the assertion follows.
\end{proof}

\begin{lem}\label{lem:large-component-zero}
The total contribution of the large carry component $\mathcal C_1$ to the coefficient of $q$ on the right-hand side of \eqref{eq:first-hasse-term} is zero.
\end{lem}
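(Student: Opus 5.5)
We express the contribution of $\mathcal C_1$ as the trace of a cyclic product of transfer matrices, and then show that the moving projective frame forces this trace to vanish. Closed walks of length $e$ in the equality graph restricted to $\mathcal C_1$ are in bijection with the elements of $\mathcal M_e$ supported on $\mathcal C_1$, with every cyclic phase counted once. The weight of a walk is $\prod_j$ of the edge weights at step $j$. An edge leaving $\phi_j$ with label $m_j$ carries the factor $\prod_{i\in m_j}a_i^{2^j}$, and these factors multiply to $\Lambda^{u_3}\delta^{u_4+u_5+u_6}C_\Lambda^{u_6}$. Hence the contribution equals $\operatorname{tr}(\mathcal T_0\mathcal T_1\cdots\mathcal T_{e-1})$, where $\mathcal T_j$ is the $20\times20$ step-$j$ transfer matrix on $\mathcal C_1$, with rows as sources, columns as targets, and entries built from $L_j,C_j,D_j$. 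This trace lies in $\fq$, and all indices are read modulo $e$, since $L_{j+e}=L_j$ and so on.

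We expand the trace according to visits to $G_7$. Closed walks that stay entirely in $G_7$ contribute $D_0\cdots D_{e-1}\operatorname{tr}(R_0\cdots R_{e-1})$, and this is $0$ by Lemma~\ref{lem:G7-trace-zero}. In every other closed walk, each maximal $G_7$ excursion is entered from $G_2$ at some time $a$ and leaves to $G_3$ at some time $b$. We collapse it into the kernel $K_{a,b}$ of \eqref{eq:G7-kernel}, together with its scalar. Summing over all such walks turns the contribution into the trace of a product of $2\times2$ blocks around the cycle of length $e$, namely a sum over cyclic sequences of the two-state blocks of Lemma~\ref{lem:two-state-frame} and of the kernels $K_{a,b}$. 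Equivalently, we consider the reduced $12$-dimensional space indexed by $G_1,\dots,G_6$. We define a moving frame $\Pi_j$ as the direct sum over $i$ of $\langle p^{(i)}_j\rangle$, and $\bar\Pi_j$ as the corresponding sum with the barred vectors.

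Lemmas~\ref{lem:two-state-frame} and~\ref{lem:G7-kernel-frame} say that every block pulls $p_{\text{target}}$ back into $\langle p_{\text{source}}\rangle$ and $\bar p$ into $\langle\bar p\rangle$, with consistent shifts. Consequently the total return operator $\mathcal R$, obtained as the pullback over one full period along any fixed block pattern (and hence also along their sum, since all patterns respect the same frames), sends the span of the $p$-vectors at time $e$ into the span of the $p$-vectors at time $0$, and likewise for the barred vectors. The key point is the boundary relation \eqref{eq:eta-boundary}. It gives $\eta_{j+e}=\eta^{2^jq}=\eta_j+1$, so $p^{(i)}_{j+e}=\bar p^{(i)}_j$. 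Therefore $\mathcal R$ maps the span of the $p_0$ vectors into the span of the $\bar p_0$ vectors, and vice versa. For each $i$, the vectors $p^{(i)}_0$ and $\bar p^{(i)}_0$ are linearly independent, because $\eta_0^{-1}\neq(\eta_0+1)^{-1}$. So in the basis $\{p_0\}\cup\{\bar p_0\}$ the matrix of $\mathcal R$ is block anti-diagonal, and its trace is $0$.

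The main obstacle is the bookkeeping that justifies collapsing the trace. We must check that the $20$-state trace, after eliminating the $G_7$ states, equals the trace over the $G_1$–$G_6$ states of a sum of products of blocks and kernels. This is a Schur-complement style identity for periodic products: walks that pass through $G_7$ without ever closing inside it are exactly accounted for by the kernels, and pure $G_7$ cycles are handled separately. We also need the frames to be compatible at group boundaries through the shifts $s_i$. This is already encoded in the two lemmas, including $K_{a,b}$ carrying $p^{(3)}_{b+1}$ to $p^{(2)}_a$. We will write the collapsed operator as an $\fqq$-linear map on column vectors, so that the anti-diagonal argument applies to the whole sum at once.
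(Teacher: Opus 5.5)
Your skeleton matches the paper's: the transfer-matrix trace, Lemma~\ref{lem:G7-trace-zero} for pure $G_7$ cycles, collapsing $G_7$ excursions into $K_{a,b}$, frame preservation, and the swap $p^{(i)}_{j+e}=\bar p^{(i)}_j$ that makes the return operator anti-diagonal. The $12$-dimensional anti-diagonal argument itself is fine. The gap is the step you call bookkeeping: the asserted Schur-complement identity is false.

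The trace $\operatorname{tr}(\mathcal T_0\cdots\mathcal T_{e-1})$ counts each closed walk exactly once, at its state $\phi_0$ at the cut. So $\operatorname{tr}\mathcal R$ on the $G_1$--$G_6$ coordinates counts precisely the walks with $\phi_0\notin G_7$. Lemma~\ref{lem:G7-trace-zero} counts the walks that never leave $G_7$. The closed walks with $\phi_0\in G_7$ that do leave $G_7$ fall in neither class, and your proof says nothing about them. These are the walks whose $G_7$ excursion straddles the cut at time $0\equiv e$, e.g.\ $G_7\to\cdots\to G_7\to G_3\to G_1\to G_2\to G_7\to\cdots$. A toy case already shows the failure: take one outside state, one ``$G_7$'' state, and transfer matrix $\begin{pmatrix}0&1\\1&0\end{pmatrix}$. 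The full trace of $T^2$ counts two closed walks, the pure part none, and the reduced trace only one. Reading indices modulo $e$ does not help, because a kernel wrapping around the cut is not a factor of any operator from time $e$ to time $0$. The anti-diagonality of $\mathcal R$ therefore does not see these walks. Summing over all cut positions does not help either: it only multiplies the trace by $e$, which is $0$ in characteristic $2$.

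The missing step is the paper's rotation argument:
\begin{enumerate}
\item Partition these walks by the time $k$, $1\le k<e$, at which the first exit from $G_7$ after the cut arrives in $G_3$.
\item Rebase each walk at that $G_3$ state. This is a bijection onto a constrained family of $G_3$-to-$G_3$ words on $[k,k+e]$ with the same weights.
\item Each such word is a product of two-state blocks, kernels lying inside the interval, and one boundary-crossing formal kernel $K_{a,k+e-1}$ with $a\le e-1$.
\item Lemma~\ref{lem:G7-kernel-frame} applies to that kernel because its identities hold for all integer indices. This is also why only $L_j,C_j,D_j$ may be reduced modulo $e$, not the frame indices.
\item Summing over the family for fixed $k$ gives a $2\times2$ matrix sending $p^{(3)}_{k+e}=\bar p^{(3)}_k$ into $\langle p^{(3)}_k\rangle$ and $\bar p^{(3)}_{k+e}=p^{(3)}_k$ into $\langle\bar p^{(3)}_k\rangle$. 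It is therefore anti-diagonal with trace zero.
\end{enumerate}
Summing over $k$ then disposes of the missing class.
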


\begin{proof}
The coefficient of $q$ in \eqref{eq:first-hasse-term} is computed by transfer
matrices. For a fixed equality component, form at the $j$-th digit the block
matrix whose entries are the coefficient weights of equality edges from time
$j$ to time $j+1$. The trace of the product of these digit matrices is exactly
the sum of the weights of all closed equality-graph walks in the component: an
expanded diagonal entry records the successive states of one closed walk, and
every closed walk is obtained once from its chosen base state.

We first take the base point in one of $G_1,\ldots,G_6$. On the time interval
$[0,e]$ such a closed walk starts and ends outside $G_7$; hence no maximal visit
to $G_7$ crosses the boundary of the interval. We may therefore collapse each
maximal visit to $G_7$ into a kernel $K_{a,b}$ as in \eqref{eq:G7-kernel}. The
scalar weights omitted from the projective kernels merely multiply individual
path contributions and do not affect line containment. By
Lemmas~\ref{lem:two-state-frame} and~\ref{lem:G7-kernel-frame}, every elementary
block and every collapsed $G_7$ kernel preserves the two moving projective
lines. Hence the total pullback matrix $N_i$ of all such closed walks based in
$G_i$ satisfies
\[
N_ip_e^{(i)}\in\langle p_0^{(i)}\rangle,
\qquad
N_i\bar p_e^{(i)}\in\langle\bar p_0^{(i)}\rangle.
\]
Here the statement applies to the total matrix because the set of linear maps
sending each of two fixed lines into the corresponding target lines is closed
under addition. By \eqref{eq:eta-boundary},
\[
\eta_{e+s_i}=\eta_{s_i}+1,
\]
so
\[
p_e^{(i)}=\bar p_0^{(i)},
\qquad
\bar p_e^{(i)}=p_0^{(i)}.
\]
The two lines $\langle p_0^{(i)}\rangle$ and $\langle\bar p_0^{(i)}\rangle$ are
distinct, because $\eta_{s_i}^{-1}\ne(\eta_{s_i}+1)^{-1}$. Therefore $N_i$ is
anti-diagonal in the basis $(p_0^{(i)},\bar p_0^{(i)})$, and hence has trace
zero. Thus all closed walks whose chosen base point lies in
$G_1,\ldots,G_6$ have total contribution zero.

It remains to treat base points in $G_7$. The walks that never leave $G_7$ have
zero contribution by Lemma~\ref{lem:G7-trace-zero}, because their matrix is a
nonzero scalar multiple of a product of the matrices $R(C_j)$; the scalar does
not change the fact that the trace is zero.
Now consider the walks based in $G_7$ which do leave $G_7$. Partition them by the
first exit from $G_7$ after the chosen base point; every such exit goes from
$G_7$ to $G_3$. Suppose that the exit edge arrives at its $G_3$ endpoint at time
$k$ after the chosen base point. Then $1\le k<e$. Rotate each closed word so
that this $G_3$ endpoint is the new base state. The rotated word is therefore a
constrained $G_3$-based word on the interval from time $k$ to time $k+e$. This
rotation does not change the scalar weight of the word. We keep the resulting
constraints; we do not replace the constrained family by all $G_3$-based walks.
For a fixed first exit, this rotation is a bijection between the original
$G_7$-based closed words in that class and the corresponding constrained
$G_3$-based closed words on the rotated interval. The constraints only record
which excursion through $G_7$ crosses the right endpoint of the interval; they
do not change the fact that each resulting product is a composition of the
two-state blocks from Lemma~\ref{lem:two-state-frame} and of kernels covered by
Lemma~\ref{lem:G7-kernel-frame}.

Recall that a kernel $K_{a,b}$ represents an excursion that enters $G_7$ from
$G_2$ along the digit-$a$ edge and exits from $G_7$ to $G_3$ along the digit-$b$
edge; hence it maps the $G_3$ frame at time $b+1$ to the $G_2$ frame at time
$a$. After the rotation, all ordinary excursions through $G_7$ that are wholly
inside the interval collapse to such kernels. There is also one distinguished
excursion which crosses the end of the interval: the path enters $G_7$ from
$G_2$ at some time $a$ and exits at the endpoint time $k+e$, i.e. along the
edge of digit $k+e-1$. This segment is represented, up to its nonzero scalar weight, by the formal kernel
$K_{a,k+e-1}$. Lemma~\ref{lem:G7-kernel-frame} applies to this kernel as well,
because its identities hold for arbitrary integer indices.

When an index is increased by $e$, the coefficients $L_j,C_j,D_j$ are unchanged,
while $\eta_{j+e}=\eta_j+1$ exactly exchanges the barred and unbarred frames.
Thus the frame relations are consistent for every rotation amount $k$.
Consequently every product matrix belonging to this constrained rotated class
sends
\[
p_{k+e}^{(3)}\quad\text{into}\quad \langle p_k^{(3)}\rangle,
\qquad
\bar p_{k+e}^{(3)}\quad\text{into}\quad \langle\bar p_k^{(3)}\rangle.
\]
The same is true after summing over all choices inside the constrained class.
Since $p_{k+e}^{(3)}=\bar p_k^{(3)}$ and $\bar p_{k+e}^{(3)}=p_k^{(3)}$, the total
matrix of this class is anti-diagonal in the basis $(p_k^{(3)},\bar p_k^{(3)})$
and has trace zero. Summing over the possible first exits proves that the entire
contribution of $G_7$-based walks which leave $G_7$ is zero.

Combining the three cases proves that the large component $\mathcal C_1$ has
zero total contribution.
\end{proof}

\begin{proof}[Proof of Proposition~\ref{prop:intrinsic-hasse}]
By Lemma~\ref{lem:first-hasse-term}, the desired congruence is obtained by
computing the coefficient of $q$ in the minimal Hasse contribution.
Lemma~\ref{lem:minimal-carry-graph} gives the three directed cycle components.
Their contributions to that coefficient are
\[
\mathcal C_0:1,
\qquad
\mathcal C_2:\delta^{N/3}+\delta^{2N/3},
\qquad
\mathcal C_1:0
\]
by Lemma~\ref{lem:large-component-zero}. Therefore
\[
\mathcal F_{\delta,\Lambda}
\equiv
q\left(1+\delta^{N/3}+\delta^{2N/3}\right)
\pmod{2q},
\]
which is exactly \eqref{eq:intrinsic-hasse-congruence}.
\end{proof}

\section{Walsh values on $\fq$}

\begin{proof}[Proof of Theorem~\ref{thm:main-Fq}]
Let $\beta\in\fq$.
By Lemma~\ref{lem:basic-reduction},
\[
W_{f_{\alpha}}(\beta)=
\sum_{(y,z)\in\fq\times\mu_{q+1}}
\chi_q(\alpha(z^9+z^{-9})y^3+\beta(z^3+z^{-3})y)-q.
\]
For $\beta\in\fq$ we have $P_{\alpha}(1)=Q_{\beta}(1)=0$, so the slice $z=1$ contributes exactly $q$, which cancels the final $-q$.
Hence
\[
W_{f_{\alpha}}(\beta)
=
\sum_{(y,z)\in\fq\times\mu_{q+1}^*}
\chi_q(\alpha(z^9+z^{-9})y^3+\beta(z^3+z^{-3})y).
\]
Since $\gcd(3,q+1)=1$, the map $z\mapsto z^3$ permutes $\mu_{q+1}$.
Therefore
\begin{equation}\label{eq:thm12-start}
W_{f_{\alpha}}(\beta)
=
\sum_{(y,z)\in\fq\times\mu_{q+1}^*}
\chi_q(\alpha(z^3+z^{-3})y^3+\beta(z+z^{-1})y).
\end{equation}
Put
\[
T=\set{z+z^{-1}:z\in\mu_{q+1}^*}.
\]
By Lemma~\ref{lem:T-set},
\[
T=\set{t\in\fq^*: \Tr_q(1/t)=1}.
\]
Fix $\lambda\in\fq$ with $\Tr_q(\lambda)=1$.
For $t\in T$, the equation
\[
x^2+x+\lambda=1+t^{-2}
\]
has exactly two distinct roots in $\fq$: indeed
\[
\Tr_q(1+t^{-2}+\lambda)=\Tr_q(1)+\Tr_q(t^{-2})+\Tr_q(\lambda)=0+1+1=0,
\]
because $e$ is even and $\Tr_q(t^{-2})=\Tr_q(t^{-1})=1$. The map
$t\mapsto 1+t^{-2}$ is injective on $T$: if $1+t^{-2}=1+s^{-2}$, then
$t^{-1}=s^{-1}$ because the square map is an automorphism of $\fq$, and hence
$t=s$. Thus the resulting two-root fibres are disjoint; since they have total
size $q$, they form a partition of $\fq$.
Using again that the map $z\mapsto z+z^{-1}$ from $\mu_{q+1}^*$ onto $T$ is two-to-one, we get from \eqref{eq:thm12-start}
\begin{align*}
W_{f_{\alpha}}(\beta)
&=2\sum_{(y,t)\in\fq\times T}\chi_q(\alpha y^3t^3(1+t^{-2})+\beta ty)\\
&=2\sum_{(y,t)\in\fq\times T}\chi_q(\alpha y^3(1+t^{-2})+\beta y),
\end{align*}
where in the second line we replaced $y$ by $t^{-1}y$ for each fixed $t\in T$.
Since each value $1+t^{-2}$ with $t\in T$ is attained twice by the map $x\mapsto x^2+x+\lambda$, we obtain
\begin{align*}
W_{f_{\alpha}}(\beta)
&=\sum_{x\in\fq}\sum_{y\in\fq}\chi_q(\alpha y^3(x^2+x+\lambda)+\beta y)\\
&=\sum_{y\in\fq}\chi_q(\alpha\lambda y^3+\beta y)
\sum_{x\in\fq}\chi_q(\alpha x^2y^3+\alpha xy^3).
\end{align*}
Since
\[
\Tr_q(\alpha xy^3)=\Tr_q\bigl((\alpha xy^3)^2\bigr)=\Tr_q(\alpha^2x^2y^6),
\]
we obtain
\begin{equation}\label{eq:thm12-key}
W_{f_{\alpha}}(\beta)
=
\sum_{y\in\fq}\chi_q(\alpha\lambda y^3+\beta y)
\sum_{x\in\fq}\chi_q(\alpha x^2y^3(1+\alpha y^3)).
\end{equation}

Assume first that $\alpha$ is not a cube in $\fq$.
Then $1+\alpha y^3\neq 0$ for every $y\in\fq^*$.
For such $y$, the map $x\mapsto \alpha x^2y^3(1+\alpha y^3)$ is a permutation of $\fq$, and therefore
\[
\sum_{x\in\fq}\chi_q(\alpha x^2y^3(1+\alpha y^3))=\sum_{x\in\fq}\chi_q(x)=0.
\]
The only surviving term in \eqref{eq:thm12-key} is $y=0$, which contributes $q$.
Hence
\[
W_{f_{\alpha}}(\beta)=q
\qquad\text{for all }\beta\in\fq.
\]
This proves part~\textup{(b)}.

Now assume that $\alpha$ is a cube in $\fq$.
Since $\gcd(3,q-1)=3$, the equation $\alpha y^3=1$ has exactly three roots
$y_1,y_2,y_3\in\fq^*$, and these roots satisfy
\[
y_2=\omega y_1,
\qquad
y_3=\omega^2 y_1,
\qquad
y_1+y_2+y_3=0,
\]
where $\omega\in\fq$ is a primitive cubic root of unity.
Equation \eqref{eq:thm12-key} now gives
\begin{align*}
W_{f_{\alpha}}(\beta)
&=q\sum_{y\in\{0,y_1,y_2,y_3\}}\chi_q(\alpha\lambda y^3+\beta y)\\
&=q\Bigl(1+(-1)^{1+\Tr_q(\beta y_1)}+(-1)^{1+\Tr_q(\beta y_2)}+(-1)^{1+\Tr_q(\beta y_3)}\Bigr),
\end{align*}
because $\Tr_q(\lambda)=1$ and all other $y$ contribute zero.

For any $\beta\in\fq$, the triple
\[
\bigl(\Tr_q(\beta y_1),\Tr_q(\beta y_2),\Tr_q(\beta y_3)\bigr)\in\ftwo^3
\]
has even parity, since
\[
\Tr_q(\beta y_1)+\Tr_q(\beta y_2)+\Tr_q(\beta y_3)
=\Tr_q\bigl(\beta(y_1+y_2+y_3)\bigr)=0.
\]
Hence the only possibilities are $(0,0,0)$ or a triple with exactly two coordinates equal to $1$.
Define
\begin{align*}
\mathcal W_1&=\set{\beta\in\fq:\Tr_q(\beta y_1)=\Tr_q(\beta y_2)=\Tr_q(\beta y_3)=0},\\
\mathcal W_2&=\set{\beta\in\fq:\bigl(\Tr_q(\beta y_1),\Tr_q(\beta y_2),\Tr_q(\beta y_3)\bigr)\text{ has exactly two entries }1}.
\end{align*}
Then $\fq=\mathcal W_1\cup\mathcal W_2$ and
\[
W_{f_{\alpha}}(\beta)=
\begin{cases}
-2q,& \beta\in\mathcal W_1,\\
\phantom{-}2q,& \beta\in\mathcal W_2.
\end{cases}
\]

Since $y_2=\omega y_1$ with $\omega\notin\ftwo$, the elements $y_1$ and $y_2$ are $\ftwo$-linearly independent.
Therefore
\[
\mathcal W_1=\set{\beta\in\fq:\Tr_q(\beta y_1)=\Tr_q(\beta y_2)=0}
\]
is an $(e-2)$-dimensional $\ftwo$-subspace of $\fq$.
Hence
\[
\abs{\mathcal W_1}=q/4,
\qquad
\abs{\mathcal W_2}=3q/4.
\]
In particular, both sets are nonempty, and therefore
\[
\set{W_{f_{\alpha}}(\beta):\ \beta\in\fq}=\set{2q,-2q}.
\]
This proves part~\textup{(a)} and completes the proof of Theorem~\ref{thm:main-Fq}.
\end{proof}

\section{Walsh values off $\fq$}

\begin{proof}[Proof of Theorem~\ref{thm:main-outside}]
Let $\beta\in\fqq\setminus\fq$ and put $b=\beta+\bar\beta\in\fq^*$.
Choose $\lambda\in\fq$ with $\Tr_q(\lambda)=1$, choose $\theta\in\fqq$ with
$\theta^2+\theta=\lambda+1$, and write
\[
\beta=b(c+\theta),
\qquad c\in\fq.
\]
Put
\[
\kappa=\alpha b^{-3},
\qquad
\Lambda=c^2+c+\lambda.
\]
Since $b^{-3}$ is a cube in $\fq^*$, the element $\kappa$ is a noncube whenever $\alpha$ is a noncube.
By Proposition~\ref{prop:outside-reduction},
\[
W_{f_\alpha}(\beta)+q=\mathcal F_{\kappa,\Lambda}.
\]
Proposition~\ref{prop:intrinsic-hasse} gives
\[
\mathcal F_{\kappa,\Lambda}
\equiv
q\left(1+\kappa^{N/3}+\kappa^{2N/3}\right)
\pmod {2q},
\qquad N=q-1.
\]
Because $\kappa$ is a noncube, $\omega=\kappa^{N/3}$ belongs to $\mathbb F_4^*\setminus\{1\}$, and hence
\[
1+\omega+\omega^2=0.
\]
Thus
\[
\mathcal F_{\kappa,\Lambda}\equiv0\pmod {2q},
\]
and therefore
\[
W_{f_\alpha}(\beta)\equiv q\pmod {2q}.
\]
Walsh coefficients are ordinary integers, and the last congruence says precisely
that every outside Walsh coefficient is of the form $q(2m+1)$ with
$m\in\mathbb Z$, that is, an odd multiple of $q$.

By Theorem~\ref{thm:main-Fq}\textup{(b)}, $W_{f_\alpha}(\gamma)=q$ for every $\gamma\in\fq$. Walsh orthogonality gives
\[
\sum_{\gamma\in\fqq}W_{f_\alpha}(\gamma)^2=q^4.
\]
Hence
\[
\sum_{\gamma\in\fqq\setminus\fq}W_{f_\alpha}(\gamma)^2
=q^4-q\cdot q^2=q^3(q-1).
\]
There are $q^2-q=q(q-1)$ elements in $\fqq\setminus\fq$. Each outside coefficient is an odd multiple of $q$, so each outside square is at least $q^2$. The total square sum is exactly $q(q-1)q^2$, and therefore every outside square is equal to $q^2$. Thus
\[
W_{f_\alpha}(\beta)\in\{q,-q\}
\qquad(\beta\in\fqq\setminus\fq).
\]
Finally, since $f_\alpha(0)=0$, the first Walsh orthogonality relation gives
\[
\begin{aligned}
\sum_{\gamma\in\fqq}W_{f_\alpha}(\gamma)
&=\sum_{x\in\fqq}(-1)^{f_\alpha(x)}
  \sum_{\gamma\in\fqq}(-1)^{\Tr_{q^2}(\gamma x)} \\
&=q^2(-1)^{f_\alpha(0)}=q^2.
\end{aligned}
\]
The contribution from $\fq$ is already $q^2$, so the outside coefficients have total sum zero. Since there are outside points and each outside coefficient is $\pm q$, both signs occur. This proves Theorem~\ref{thm:main-outside}.
\end{proof}

\begin{proof}[Proof of Corollary~\ref{cor:distribution}]
Assume that $\alpha$ is not a cube in $\fq$.
By Theorem~\ref{thm:main-Fq}\textup{(b)}, the value $q$ occurs at every point of $\fq$.
By Theorem~\ref{thm:main-outside}, every point of $\fqq\setminus\fq$ contributes either $q$ or $-q$.
Let $N_+$ and $N_-$ be the total multiplicities of $q$ and $-q$, respectively. Then
\[
N_++N_-=q^2,
\qquad
q(N_+-N_-)=\sum_{\beta\in\fqq}W_{f_\alpha}(\beta)=q^2,
\]
where the last equality is the Walsh orthogonality identity just used above.
Thus
\[
N_+=\frac{q(q+1)}2,
\qquad
N_-=\frac{q(q-1)}2.
\]
\end{proof}

\begin{proof}[Proof of Corollary~\ref{cor:conj}]
If $\alpha$ is not a cube in $\fq$, then Theorem~\ref{thm:main-Fq}\textup{(b)} and Theorem~\ref{thm:main-outside} show that every Walsh coefficient has absolute value $q$, so $f_\alpha$ is bent.
If $\alpha$ is a cube in $\fq$, then Theorem~\ref{thm:main-Fq}\textup{(a)} gives Walsh coefficients $\pm2q$ on $\fq$, and hence $f_\alpha$ is not bent.
Therefore Conjecture~\ref{con:main} holds.
\end{proof}

\begin{rem}\label{rem:cubic-plateaued}
Although the cubic case is already sufficient to disprove bentness by
Theorem~\ref{thm:main-Fq}\textup{(a)}, computations suggest a more precise
spectral behaviour. Namely, if $\alpha$ is a cube in $\fq$, then
$$
\set{W_{f_{\alpha}}(\beta):\ \beta\in\fqq\setminus\fq}=\begin{cases} \set{0},& e=2,\\
\set{-2q,0,2q},& e\ge 4.
\end{cases}$$
The methods of the present paper prove the weaker divisibility
$W_{f_\alpha}(\beta)\equiv0\pmod{2q}$ for $\beta\notin\fq$ in the cubic case,
but do not by themselves exclude larger multiples of $2q$.
\end{rem}

\section{Cyclotomic reformulation and application}

In this section we show that the cyclotomic family introduced by Xie, Li, Wang, and Zeng~\cite[Conjecture~1]{Xie2025} coincides with the present permutation-inverse family.
Thus their conjecture is not a separate phenomenon: the two constructions define identical Boolean functions after the coordinate change below, not merely EA- or CCZ-equivalent functions. It is therefore a direct reformulation of Corollary~\ref{cor:conj}.
Let $u$ be a generator of $\mu_{q+1}$. Since $\fqq^*=\fq^*\mu_{q+1}$ and
$\fq^*\cap\mu_{q+1}=\{1\}$, the cosets $u^i\fq^*$, $0\le i\le q$, form a
disjoint decomposition of $\fqq^*$.
For $\alpha\in\fq^*$, define a Boolean function $g_\alpha$ on $\fqq$ by
\[
 g_\alpha(0)=0,
 \qquad
 g_\alpha(x)=\Tr_{q^2}\!\left(
 \alpha\,\frac{u^{6i}}{(1+u^{2i}+u^{-2i})^3}x^3
 \right)
 \quad\text{for }x\in u^i\fq^*,\ 0\le i\le q.
\]
The conjecture in~\cite[Conjecture~1]{Xie2025} asserts the following.
\begin{con}\label{con:xie}
$g_\alpha$ is bent if and only if
$\alpha$ is not a cube in $\fq$.
\end{con}
We prove that $g_\alpha$ coincides with the function $f_\alpha$ defined in~\eqref{eq:def-falpha} for every $\alpha\in\fq^*$; this proves Conjecture~\ref{con:xie}.

\begin{thm}\label{thm:cyclotomic-application}
For every $z\in \mu_{q+1}$ and every $x\in z\fq^*$, we have
\[
 \sigma^{-1}(x)=\frac{z^2}{1+z^2+z^{-2}}\,x.
\]
In particular,
\[
 g_\alpha(x)=f_\alpha(x)\qquad\text{for all }x\in \fqq.
\]
\end{thm}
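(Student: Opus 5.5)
The plan is to verify the formula directly, by applying $\sigma$ to the proposed preimage and checking that the result is $x$. Since $\sigma$ is a permutation of $\fqq$ by \cite{Ding2015}, a single verified preimage is the unique one. Fix $z\in\mu_{q+1}$ and $x=yz$ with $y\in\fq^*$. The first step is to show that the denominator $c_z:=1+z^2+z^{-2}$ is a nonzero element of $\fq$.

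\emph{$c_z$ lies in $\fq$.} Since $z^q=z^{-1}$, we get $c_z^q=c_z$.

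\emph{$c_z$ is nonzero.} In characteristic two, $z^2c_z=z^4+z^2+1=(z^2+z+1)^2$. So $c_z=0$ would force $z$ to be a primitive cube root of unity. Such roots lie in $\fq$ because $3\mid q-1$, and $\fq^*\cap\mu_{q+1}=\{1\}$. Since $c_1=1\neq0$, this case cannot occur.

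Next put
\[
u_0=\frac{z^2}{c_z}\,x=r\,z^3,\qquad r:=\frac{y}{c_z}\in\fq^*,
\]
and compute $u_0^d$ factor by factor.

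\emph{The $\fq^*$ factor.} We have $d-1=(q-1)(q+2)/3$, and $(q+2)/3$ is an integer because $q\equiv1\pmod 3$. Hence $r^d=r$.

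\emph{The $\mu_{q+1}$ factor.} We have $(z^3)^d=z^{q^2+q+1}=z^{q^2}\cdot z^{q+1}=z$, using $z^{q+1}=1$ and $z^{q^2}=z$.

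Therefore $u_0^d=rz$, and applying Frobenius gives $u_0^{dq}=r^q z^q=rz^{-1}$. Consequently
\[
\sigma(u_0)=r\,(z^3+z+z^{-1})=r\,z\,(z^2+1+z^{-2})=r\,z\,c_z=yz=x .
\]
This proves the first assertion.

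For the second assertion, take $z=u^i$ for $x\in u^i\fq^*$. Cubing the formula gives
\[
\sigma^{-1}(x)^3=\frac{u^{6i}}{(1+u^{2i}+u^{-2i})^3}\,x^3,
\]
and multiplying by $\alpha$ and taking $\Tr_{q^2}$ shows that $f_\alpha(x)$ equals the defining expression of $g_\alpha(x)$. At $x=0$ we have $\sigma(0)=0$, so $f_\alpha(0)=0=g_\alpha(0)$. Hence $g_\alpha=f_\alpha$ on all of $\fqq$.

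There is no serious obstacle. The only points needing care are the nonvanishing of $c_z$ and the exponent reductions $r^d=r$ and $z^{3d}=z$, both of which come down to $q\equiv1\pmod 3$.
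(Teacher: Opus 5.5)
Your proposal is correct and follows essentially the same route as the paper: both verify directly that $\sigma$ sends $\frac{z^2}{1+z^2+z^{-2}}x$ to $x$, using $a^d=a$ for $a\in\fq$ and $z^{3d}=z$ for $z\in\mu_{q+1}$. The only minor variation is the proof that $1+z^2+z^{-2}\neq 0$. You use $z^4+z^2+1=(z^2+z+1)^2$ together with $\fq^*\cap\mu_{q+1}=\{1\}$, whereas the paper derives $z^6=1$ and uses $\gcd(6,q+1)=1$.
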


\begin{proof}
Fix $z\in \mu_{q+1}$ and put
\[
 \ell_z=1+z^2+z^{-2}.
\]
Since $z^q=z^{-1}$, we have
\[
 \ell_z^q=1+z^{-2}+z^2=\ell_z,
\]
so $\ell_z\in \fq$. We claim that $\ell_z\ne 0$. Indeed, if
$1+z^2+z^{-2}=0$, then $z^4+z^2+1=0$, and hence
\[
 z^6+1=(z^2+1)(z^4+z^2+1)=0.
\]
Thus $z^6=1$. Since $e$ is even, we have $q=2^e\equiv 1\pmod 3$, so
$q+1\equiv 2\pmod 3$ and therefore $\gcd(3,q+1)=1$. Also $q+1$ is odd, so
$\gcd(2,q+1)=1$. Hence $\gcd(6,q+1)=1$. Because $z\in \mu_{q+1}$ and
$z^6=1$, it follows that $z=1$, which is impossible because
$1+1+1=1\ne 0$ in characteristic $2$. Therefore $\ell_z\in \fq^*$.

Now let $x\in z\fq^*$ and write $x=cz$ with $c\in \fq^*$. Set
\[
 y=\frac{c}{\ell_z}z^3.
\]
Since $e$ is even, $q\equiv 1\pmod 3$, and hence
\[
 d=\frac{q^2+q+1}{3}=1+\frac{q+2}{3}(q-1).
\]
Therefore $a^d=a$ for every $a\in \fq$. Also,
\[
 z^{3d}=z^{q^2+q+1}=z
\]
because $z^{q+1}=1$. It follows that
\[
 y^d=\left(\frac{c}{\ell_z}\right)^d z^{3d}=\frac{c}{\ell_z}z,
 \qquad
 y^{dq}=(y^d)^q=\frac{c}{\ell_z}z^{-1}.
\]
Consequently,
\begin{align*}
 \sigma(y)
 &=y+y^d+y^{dq}\\
 &=\frac{c}{\ell_z}(z^3+z+z^{-1})\\
 &=\frac{c}{\ell_z}\,z(1+z^2+z^{-2})\\
 &=cz=x.
\end{align*}
Since $\sigma$ is a permutation of $\fqq$, we obtain
\[
 \sigma^{-1}(x)=y=\frac{c}{\ell_z}z^3=\frac{z^2}{1+z^2+z^{-2}}\,x.
\]
This proves the first assertion.

Now let $x\in \fqq^*$. There is a unique integer $i$ with $0\le i\le q$
such that $x\in u^i\fq^*$. Applying the formula just proved with $z=u^i$, we get
\[
 \sigma^{-1}(x)=\frac{u^{2i}}{1+u^{2i}+u^{-2i}}\,x.
\]
Cubing both sides yields
\[
 (\sigma^{-1}(x))^3=
 \frac{u^{6i}}{(1+u^{2i}+u^{-2i})^3}x^3.
\]
Hence
\[
 f_\alpha(x)=\Tr_{q^2}\bigl(\alpha(\sigma^{-1}(x))^3\bigr)
 =\Tr_{q^2}\!\left(
 \alpha\,\frac{u^{6i}}{(1+u^{2i}+u^{-2i})^3}x^3
 \right)
 =g_\alpha(x).
\]
Also $f_\alpha(0)=g_\alpha(0)=0$, since $\sigma^{-1}(0)=0$. Therefore
$g_\alpha(x)=f_\alpha(x)$ for all $x\in \fqq$. The proof of Theorem~\ref{thm:cyclotomic-application} is complete.
\end{proof}

\begin{cor}\label{cor:xie-conj}
Let $\alpha\in \fq^*$. Then the function $g_\alpha$ is bent if and only if
$\alpha$ is not a cube in $\fq$.
\end{cor}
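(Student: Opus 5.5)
The plan is to deduce Corollary~\ref{cor:xie-conj} directly from Theorem~\ref{thm:cyclotomic-application} and Corollary~\ref{cor:conj}, with no new spectral computation. Theorem~\ref{thm:cyclotomic-application} already shows that $g_\alpha(x)=f_\alpha(x)$ for every $x\in\fqq$. So $g_\alpha$ and $f_\alpha$ are the same Boolean function on $\fqq$, and their Walsh transforms agree pointwise:
\[
W_{g_\alpha}(\beta)=\sum_{x\in\fqq}(-1)^{g_\alpha(x)+\Tr_{q^2}(\beta x)}=\sum_{x\in\fqq}(-1)^{f_\alpha(x)+\Tr_{q^2}(\beta x)}=W_{f_\alpha}(\beta)\qquad(\beta\in\fqq).
\]
It follows that $g_\alpha$ is bent exactly when $f_\alpha$ is bent.

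The second step applies Corollary~\ref{cor:conj}, which says that $f_\alpha$ is bent if and only if $\alpha$ is not a cube in $\fq$. Concretely, the ``if'' direction comes from Theorem~\ref{thm:main-Fq}\textup{(b)} together with Theorem~\ref{thm:main-outside}. The ``only if'' direction comes from Theorem~\ref{thm:main-Fq}\textup{(a)}, which produces the subfield Walsh values $\pm2q$ in the cubic case. Transferring this equivalence through the equality of Walsh transforms above gives the statement for $g_\alpha$.

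The only point that needs checking is that $g_\alpha$ is well defined independently of the choice of generator $u$ and of the coset index $i$. This holds because each $x\in\fqq^*$ lies in exactly one coset $u^i\fq^*$ with $0\le i\le q$, as noted before Conjecture~\ref{con:xie}. Theorem~\ref{thm:cyclotomic-application} then identifies the value $g_\alpha(x)$ with the intrinsically defined $f_\alpha(x)$, so no choice affects the result. I expect no real obstacle here: all the substantive work, including the Hasse congruence of Proposition~\ref{prop:intrinsic-hasse}, is already contained in Corollary~\ref{cor:conj}.
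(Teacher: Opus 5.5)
Your proposal is correct and matches the paper's proof: Theorem~\ref{thm:cyclotomic-application} identifies $g_\alpha$ with $f_\alpha$ pointwise, and Corollary~\ref{cor:conj} then gives the bentness criterion directly. Your well-definedness remark is a harmless addition: for a fixed generator $u$ the coset index is unique, and independence of $u$ follows because $g_\alpha=f_\alpha$ holds for every generator.
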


\begin{proof}
By Theorem~\ref{thm:cyclotomic-application}, the functions $g_\alpha$ and
$f_\alpha$ are identical. Therefore, the assertion follows immediately from
Corollary~\ref{cor:conj}. This proves Conjecture~\ref{con:xie}, equivalently
Conjecture~1 of~\cite{Xie2025}.
\end{proof}

\appendix

\section{Finite checks for the Hasse congruence}\label{app:finite-checks}
This appendix records the finite verifications used in the proof of Proposition~\ref{prop:intrinsic-hasse}. These checks are finite certificates for the explicit combinatorial and matrix statements invoked in Section~2, not numerical tests for special finite fields. None of the checks depends on $q$ or on $e$; the only use of $e$ in the main proof is the cyclic boundary condition and the fact that $e$ is even. The ordinary Python scripts use only the standard library and were tested with Python 3.13.5; the final symbolic check was tested with SageMath 10.x. The expected outputs are displayed after the corresponding scripts.

The checks appear in the same order as they are used in the proof: potential inequality and equality components; weighted large-component blocks; trace-zero algebra; and the $G_7$ projective-frame identities. The first script verifies the potential inequality \eqref{eq:potential-ineq} and lists the directed cycle components of the equality graph.

\begin{verbatim}
from collections import defaultdict
D = [(3,1),(2,1),(1,1),(8,3),(1,3),(0,3)]
nu = [
[3,3,2,1,1,1,1,0,0,0,0,0,0,0,0],
[3,2,2,2,3,1,1,0,0,0,0,0,0,0,0],
[2,2,2,2,2,2,2,2,0,0,0,0,0,0,0],
[2,1,2,1,2,1,2,1,1,0,0,0,0,0,0],
[1,1,1,1,1,1,1,1,0,0,0,0,0,0,0],
[1,1,1,1,1,1,1,0,0,0,0,0,0,0,0],
[0]*15,[0]*15,[0]*15,[0]*15,[0]*15,[0]*15]
def pot(v):
    x,y = v
    return nu[y-1][x-1]
def dsum(mask):
    return (sum(D[i][0] for i in range(6) if mask>>i & 1),
            sum(D[i][1] for i in range(6) if mask>>i & 1))
def size(mask):
    return bin(mask).count("1")

bad, eq = [], []
for x in range(1,16):
  for y in range(1,13):
    phi = (x,y)
    for mask in range(64):
      a,b = dsum(mask)
      if (x+a)%2 or (y+b)%2:
          continue
      phip = ((x+a)//2,(y+b)//2)
      if not (1 <= phip[0] <= 15 and 1 <= phip[1] <= 12):
          continue
      slack = size(mask)-1-pot(phi)+pot(phip)
      if slack < 0:
          bad.append((phi,mask,phip,slack))
      if slack == 0:
          eq.append((phi,mask,phip))
assert bad == []

adj = defaultdict(list)
V = set()
for u,mask,v in eq:
    adj[u].append((v,mask)); V.add(u); V.add(v)

def reach_from(a):
    seen, stack = set(), [v for v,m in adj[a]]
    while stack:
        v = stack.pop()
        if v in seen:
            continue
        seen.add(v)
        stack += [w for w,m in adj[v]]
    return seen
R = {v: reach_from(v) for v in V}
cyclic = {v for v in V if v in R[v]}
components = []
while cyclic:
    v = next(iter(cyclic))
    comp = {w for w in cyclic if w in R[v] and v in R[w]}
    components.append(sorted(comp))
    cyclic -= comp
print(sorted(components, key=lambda c:(len(c),c)))
\end{verbatim}

The printed components are
\[
\begin{aligned}
&\relax[(3,1)],\qquad [(3,2),(6,4)],\\
&[(1,1),(1,2),(1,3),(2,1),(2,2),(2,3),(3,3),\\
&\quad (4,2),(4,3),(4,4),(5,2),(5,3),(5,4),(6,3),\\
&\quad (7,3),(7,4),(7,5),(8,3),(8,4),(8,5)].
\end{aligned}
\]
The same equality edges, with their coefficient weights, give the transition
matrices displayed before Lemma~\ref{lem:two-state-frame} and
Lemma~\ref{lem:G7-kernel-frame}. The following ordinary Python check rebuilds
those weighted blocks from the equality graph. A polynomial is represented by
the set of its monomials in the symbols $L,D,C$ over $\ftwo$; this makes the
verification independent of $q$, $e$, $\Lambda$, and $\delta$.

\begingroup\footnotesize
\begin{verbatim}
from collections import defaultdict
D = [(3,1),(2,1),(1,1),(8,3),(1,3),(0,3)]
nu = [
[3,3,2,1,1,1,1,0,0,0,0,0,0,0,0],
[3,2,2,2,3,1,1,0,0,0,0,0,0,0,0],
[2,2,2,2,2,2,2,2,0,0,0,0,0,0,0],
[2,1,2,1,2,1,2,1,1,0,0,0,0,0,0],
[1,1,1,1,1,1,1,1,0,0,0,0,0,0,0],
[1,1,1,1,1,1,1,0,0,0,0,0,0,0,0],
[0]*15,[0]*15,[0]*15,[0]*15,[0]*15,[0]*15]
def pot(v): return nu[v[1]-1][v[0]-1]
def dsum(mask):
    return (sum(D[i][0] for i in range(6) if mask>>i & 1),
            sum(D[i][1] for i in range(6) if mask>>i & 1))
def size(mask): return bin(mask).count('1')
eq = []
for x in range(1,16):
  for y in range(1,13):
    for mask in range(64):
      a,b = dsum(mask)
      if (x+a)%2 or (y+b)%2: continue
      xp,yp = (x+a)//2, (y+b)//2
      if not (1 <= xp <= 15 and 1 <= yp <= 12): continue
      if size(mask)-1-pot((x,y))+pot((xp,yp)) == 0:
          eq.append(((x,y),mask,(xp,yp)))
G = {
1:[(1,1),(2,1)], 2:[(1,2),(5,2)], 3:[(2,2),(4,2)],
4:[(4,4),(8,4)], 5:[(5,4),(7,4)], 6:[(7,5),(8,5)],
7:[(1,3),(2,3),(3,3),(4,3),(5,3),(6,3),(7,3),(8,3)]}
group = {v:g for g,vs in G.items() for v in vs}
C1 = set(group)
# A polynomial is a set of monomials L^a D^b C^c over F_2.
Z = frozenset()
def mono(a=0,b=0,c=0): return frozenset({(a,b,c)})
def add(p,q): return p ^ q
def mul(p,q):
    out = Z
    for a,b,c in p:
      for d,e,f in q:
        out = add(out, mono(a+d,b+e,c+f))
    return out
ONE,L,Dv,C = mono(), mono(1,0,0), mono(0,1,0), mono(0,0,1)
def weight(mask):
    w = ONE
    for i,f in enumerate([ONE,ONE,L,Dv,Dv,mul(Dv,C)]):
        if mask>>i & 1: w = mul(w,f)
    return w
def mat(rows, cols, entries=()):
    M = [[Z for _ in cols] for _ in rows]
    for r,c,v in entries: M[r][c] = v
    return M
def block(g,h):
    rows, cols = G[g], G[h]
    M = [[Z for _ in cols] for _ in rows]
    for u,mask,v in eq:
      if u in rows and v in cols:
        r, c = rows.index(u), cols.index(v)
        M[r][c] = add(M[r][c], weight(mask))
    return M
DC, D2, LC, DLC = mul(Dv,C), mul(Dv,Dv), mul(L,C), mul(mul(Dv,L),C)
expected = {
(1,1): mat(G[1],G[1], [(0,0,L),(0,1,ONE),(1,1,ONE)]),
(1,2): mat(G[1],G[2], [(0,0,Dv),(1,0,DC),(1,1,Dv)]),
(2,3): mat(G[2],G[3], [(0,0,L),(1,1,L)]),
(2,5): mat(G[2],G[5], [(0,0,D2),(1,1,D2)]),
(3,1): mat(G[3],G[1], [(0,0,ONE),(1,1,ONE)]),
(4,3): mat(G[4],G[3], [(0,0,ONE),(1,1,ONE)]),
(5,4): mat(G[5],G[4], [(0,0,add(Dv,DC)),(0,1,Dv),(1,0,DLC),(1,1,mul(Dv,L))]),
(5,6): mat(G[5],G[6], [(0,0,D2),(1,1,D2)]),
(6,4): mat(G[6],G[4], [(0,0,Dv),(1,0,DC),(1,1,Dv)]),
(2,7): mat(G[2],G[7], [(0,0,DLC),(0,1,add(Dv,DC)),(0,4,mul(Dv,L)),(0,5,Dv),
                            (1,2,DLC),(1,3,add(Dv,DC)),(1,6,mul(Dv,L)),(1,7,Dv)]),
(7,3): mat(G[7],G[3], [(0,0,ONE),(1,0,ONE),(2,0,L),(4,1,ONE),(5,1,ONE),(6,1,L)]),
}
# internal G7 block: D*(P+C Q)
P = [(0,0),(1,4),(2,1),(3,5),(4,2),(5,6),(6,3),(7,7)]
Q = [(1,0),(3,1),(5,2),(7,3)]
expected[(7,7)] = mat(G[7],G[7], [(r,c,Dv) for r,c in P] + [(r,c,DC) for r,c in Q])
actual_pairs = {(group[u], group[v]) for u,m,v in eq if u in C1 and v in C1}
assert actual_pairs == set(expected)
for pair, M in expected.items():
    assert block(*pair) == M, pair
print('all large-component blocks verified')
\end{verbatim}
\endgroup
The expected output is
\begin{verbatim}
all large-component blocks verified
\end{verbatim}

The next ordinary Python check verifies the trace-zero algebra used in Lemma~\ref{lem:G7-trace-zero}. Matrices are over $\ftwo$, so the trace of $I_8$ is $8=0$.

\begin{verbatim}
def add(A,B):
    return [[A[i][j]^B[i][j] for j in range(8)] for i in range(8)]
def mul(A,B):
    return [[sum(A[i][k]&B[k][j] for k in range(8))%2
             for j in range(8)] for i in range(8)]
def key(A):
    return tuple(tuple(r) for r in A)
def tr(A):
    return sum(A[i][i] for i in range(8))%2
I = [[1 if i==j else 0 for j in range(8)] for i in range(8)]
P = [[1,0,0,0,0,0,0,0],
     [0,0,0,0,1,0,0,0],
     [0,1,0,0,0,0,0,0],
     [0,0,0,0,0,1,0,0],
     [0,0,1,0,0,0,0,0],
     [0,0,0,0,0,0,1,0],
     [0,0,0,1,0,0,0,0],
     [0,0,0,0,0,0,0,1]]
Q = [[0,0,0,0,0,0,0,0],
     [1,0,0,0,0,0,0,0],
     [0,0,0,0,0,0,0,0],
     [0,1,0,0,0,0,0,0],
     [0,0,0,0,0,0,0,0],
     [0,0,1,0,0,0,0,0],
     [0,0,0,0,0,0,0,0],
     [0,0,0,1,0,0,0,0]]
basis = [I,P,Q,mul(P,P),mul(P,Q),mul(Q,P),mul(Q,Q),
         mul(mul(P,P),Q),mul(mul(P,Q),P),mul(mul(P,Q),Q),
         mul(mul(Q,P),P),mul(mul(Q,P),Q),mul(mul(Q,Q),P),
         mul(mul(Q,Q),Q),mul(mul(mul(P,P),Q),P),
         mul(mul(P,P),mul(Q,Q))]
span = {key([[0]*8 for _ in range(8)])}
for B in basis:
    span |= {key(add([list(r) for r in A],B)) for A in list(span)}
assert len(span) == 2**16
assert all(tr(B) == 0 for B in basis)
assert all(key(mul(B,P)) in span for B in basis)
assert all(key(mul(B,Q)) in span for B in basis)
print('trace-zero algebra verified')
\end{verbatim}

The expected output is
\begin{verbatim}
trace-zero algebra verified
\end{verbatim}

Finally, the projective identities in Lemma~\ref{lem:G7-kernel-frame} can be
checked symbolically over the rational function field $\ftwo(t)$. The following
SageMath code, tested with SageMath 10.x, verifies the three-dimensional
invariance and the three boundary annihilations. The barred identities are
checked by replacing $t$ with $t+1$; this is exactly the conjugation
$\eta\mapsto\eta+1$ used in the proof. Since the calculation takes place in a
fraction field, all identities may equivalently be cleared of denominators;
only powers of $t$ and $t+1$ occur as possible denominators.

\begin{verbatim}
R.<t> = PolynomialRing(GF(2)); K = FractionField(R)
def e(j): return K(t)^(2^j)
def L(j): return e(j+1)+e(j)
def C(j): return e(j+3)+e(j)
def col(v): return vector(K,v)
P = Matrix(K,[[1,0,0,0,0,0,0,0],
[0,0,0,0,1,0,0,0],[0,1,0,0,0,0,0,0],
[0,0,0,0,0,1,0,0],[0,0,1,0,0,0,0,0],
[0,0,0,0,0,0,1,0],[0,0,0,1,0,0,0,0],
[0,0,0,0,0,0,0,1]])
Q = Matrix(K,[[0,0,0,0,0,0,0,0],
[1,0,0,0,0,0,0,0],[0,0,0,0,0,0,0,0],
[0,1,0,0,0,0,0,0],[0,0,0,0,0,0,0,0],
[0,0,1,0,0,0,0,0],[0,0,0,0,0,0,0,0],
[0,0,0,1,0,0,0,0]])
def Rj(j): return P + C(j)*Q
def T(j):
    return Matrix(K,[[1,0],[1,0],[L(j),0],[0,0],
                     [0,1],[0,1],[0,L(j)],[0,0]])
def S(j):
    return Matrix(K,[[L(j)*C(j),1+C(j),0,0,L(j),1,0,0],
                     [0,0,L(j)*C(j),1+C(j),0,0,L(j),1]])
def u(j,bar=False):
    x = 1/(e(j+2)+(1 if bar else 0))
    return T(j)*col([x,1])
def in_span(v,cols):
    return Matrix(K,cols+[v]).rank() == Matrix(K,cols).rank()
for bar in [False,True]:
    B0 = [u(0,bar), Rj(0)*u(1,bar), Rj(0)*Rj(1)*u(2,bar)]
    assert in_span(Rj(0)*u(1,bar), B0)
    assert in_span(Rj(0)*Rj(1)*u(2,bar), B0)
    assert in_span(Rj(0)*Rj(1)*Rj(2)*u(3,bar), B0)
    x = 1/(e(2)+(1 if bar else 0))
    alpha = Matrix(K,[[1,x]])*S(0)
    assert alpha*u(1,bar) == 0
    assert alpha*Rj(1)*u(2,bar) == 0
    assert alpha*Rj(1)*Rj(2)*u(3,bar) == 0
print('G7-kernel frame identities verified')
\end{verbatim}
The expected output is
\begin{verbatim}
G7-kernel frame identities verified
\end{verbatim}

\section*{Acknowledgments}
The author thanks Domingo Gom\'{e}z P\'{e}rez (University of Cantabria) for helpful discussions and computational checks related to the finite-field reductions.

\end{document}